\documentclass[11pt]{article}
\selectfont
\usepackage[top=2.54cm, bottom=2.54cm, left=2.5cm, right=2.5cm]{geometry}
\usepackage[tbtags]{amsmath}
\usepackage{amssymb}
\usepackage{amsthm}
\usepackage{appendix}
\usepackage{latexsym}
\usepackage{mathrsfs}
\usepackage{xcolor}
\usepackage{wasysym}
\usepackage{fancyhdr}
\usepackage[colorlinks=true, linkcolor=red, citecolor=blue, urlcolor=magenta]{hyperref}
\usepackage{float}
\usepackage{graphicx}
\usepackage[numbers,sort&compress]{natbib}

\usepackage{enumerate}
\usepackage{verbatim}

\allowdisplaybreaks[4]

\newcommand{\R}{\mathcal R}
\newcommand{\B}{\mathcal B}

\newtheorem{theorem}{Theorem}[section]
\newtheorem{lemma}[theorem]{Lemma}
\newtheorem{proposition}[theorem]{Proposition}
\newtheorem{corollary}[theorem]{Corollary}
\theoremstyle{definition}

\newtheorem{claim}{Claim}
\newtheorem{problem}[theorem]{Problem}
\newtheorem{conjecture}[theorem]{Conjecture}

\title{The Ramsey threshold for trees versus odd cycles}
\author{Qizhong Lin\footnote{Center for Discrete Mathematics, Fuzhou University,
Fuzhou 350108, P.~R.~China. Email: {\tt linqizhong@fzu.edu.cn}. Supported in part by the National Key R\&D Program of China (Grant No. 2023YFA1010202) and the NSFC (No.\ 12571361).}
\;\; and\;\; Chunlin You\footnote{Corresponding author. School of Mathematics and Statistics, Yancheng Teachers University, Yancheng 224002, P.~R.~China. E-mail: {\tt chunlin\_you@163.com}.
			Supported by the National Natural Science Foundation of China
			(No. 12401469) and the Young Elite Scientist Sponsorship Program by JSAST (JSTJ-2025-892).}}
\date{}

\begin{document}

\maketitle

\begin{abstract}
A longstanding problem of Burr, Erd\H{o}s, Faudree, Rousseau and Schelp (\emph{Trans. Amer. Math. Soc.}, 1982) is to determine, for each odd $m\ge3$, the least integer $f(m)$ such that every tree $T_n$ on $n\ge f(m)$ vertices satisfies $R(T_n,C_m)=2n-1$. We settle this problem for all sufficiently large odd $m$. More precisely, we establish
\[
f(m)=\left\lceil \frac{2m-1}{3} \right\rceil
\]
for all such $m$, where the lower bound follows from a result of Faudree,
Lawrence, Parsons and Schelp. This also confirms a conjecture of Huang, Zhang and Chen for all such $m$.
\end{abstract}

\section{Introduction}

For graphs $G$ and $H$, the Ramsey number $R(G,H)$ is the least $N$
such that every red--blue coloring of the complete graph $K_N$ contains either a red copy of
$G$ or a blue copy of $H$. These numbers lie at the heart of Ramsey
theory, a central branch of combinatorics with a long and rich history.

Let $T_n$, $P_n$, and $C_n$ denote the tree, path, and cycle with $n$ vertices, respectively.
Consider the coloring of $K_{2n-2}$ formed by two disjoint red cliques of order $n-1$, with every
edge between the two cliques colored blue. Neither red component contains
$T_n$, and the blue graph is bipartite and hence contains no odd cycle.
Therefore, we obtain the natural lower bound:
\begin{equation}
                         R(T_n,C_m)\ge2n-1.                 \label{eq:lower}
\end{equation}

In 1982, Burr, Erd\H{o}s, Faudree, Rousseau and Schelp \cite{BurrEtAl1982} posed the following problem, asking when equality holds in \eqref{eq:lower}:

\begin{problem}[Burr, Erd\H{o}s, Faudree, Rousseau and Schelp \cite{BurrEtAl1982}]
\label{pb:be}
For every odd $m\geq 3$, what is the least integer $f(m)$ such that $R(T_n,C_m)=2n-1$ whenever $n\geq f(m)$?
\footnote{This problem also appears at \url{https://www.erdosproblems.com/forum/thread/Missing\%20Erdos\%20problems}.}
\end{problem}

It is known \cite{cha} that $R(T_n,C_3)=2n-1$ for all $n\ge1$; thus the case $m=3$ is already covered.

A necessary lower bound on $f(m)$ is obtained by taking $T_n$ to be a path. Faudree,
Lawrence, Parsons and Schelp \cite{FaudreeEtAl1974} proved that
\[
R(P_n,C_m)=\max\left\{2n-1,\,m-1+\left\lfloor n/2\right\rfloor\right\}
\qquad(2\le n\le m,\ m\text{ odd}).
\]
If $n \le 2(m-1)/3$, then, since $\lfloor n/2 \rfloor > n/2 - 1$, we have $m-1 + \lfloor n/2 \rfloor > 2n-1$; thus the second term dominates. This implies
\begin{equation}
 f(m)\ge
 \left\lceil\frac{2m-1}{3}\right\rceil.                  \label{eq:path-lower}
\end{equation}

In the same paper, Burr, Erd\H{o}s, Faudree, Rousseau and Schelp \cite{BurrEtAl1982} gave the first general bound $f(m)\le756m^{10}$. This bound was subsequently improved through a series of substantial advances. Brennan \cite{Brennan2016} initiated the modern approach by obtaining the first linear estimate $f(m)\le25m$, via a structural reduction from the Ramsey question to the existence of a large vertex subset with prescribed degree properties. Fan and Lin \cite{FanLin2025} refined this framework with a strengthened dichotomy argument and a reconstruction of the end-edge matching, lowering the coefficient to $4$. Most recently, Huang, Zhang and Chen \cite{HuangZhangChen2026} further improved the bound to $2m-4$ via a structural argument based on a blue \(K_{2,n}\)-forcing technique.

Nevertheless, the exact value of $f(m)$ remained unknown. A substantial gap of $4/3$ in the leading coefficient still separated the best known upper bound from the lower bound $\lceil \frac{2m-1}3\rceil$. This persistent gap led Huang, Zhang and Chen \cite{HuangZhangChen2026} to conjecture that the path obstruction is the only obstruction, i.e., that equality holds for every odd $m\ge5$.

\begin{conjecture}[Huang, Zhang and Chen~\cite{HuangZhangChen2026}]
\label{conj:exact-threshold}
For every odd integer $m\ge5$,
$
                         f(m)=\left\lceil\frac{2m-1}{3}\right\rceil.
$
\end{conjecture}

The following theorem establishes that the lower bound \eqref{eq:path-lower} is the true threshold for all sufficiently large odd $m$.

\begin{theorem}\label{thm:main}
There is an absolute constant $C$ such that the following holds.  Let $m\ge5$
be odd, and suppose that
\[
 n\ge
 \max\left\{
   C,
   \left\lceil\frac{2m-1}{3}\right\rceil
 \right\}.
\]
Then, for every $n$-vertex tree $T_n$,
$R(T_n,C_m)=2n-1$.
\end{theorem}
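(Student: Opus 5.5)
The lower bound $R(T_n,C_m)\ge 2n-1$ is \eqref{eq:lower}, so we only need the upper bound. Fix a red--blue coloring of $K_N$ with $N=2n-1$, no red $T_n$ and no blue $C_m$; we aim for a contradiction. Since $n\ge\lceil(2m-1)/3\rceil$, we have $m\le (3n+1)/2$. In particular $m\le N$, so a blue $C_m$ is a priori possible, but $m$ is at most about $3n/2$, well below $2n$. This slack is what we must exploit. The case $n\ge 2m-4$ is covered by Huang--Zhang--Chen, so we may assume $m\ge n/2+2$; thus $m$ is linear in $n$, and both are large.

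\textbf{Step 1 (structure from no red $T_n$).} By the standard embedding lemma, a graph in which every vertex of some subset has degree $\ge n-1$ inside it contains every $T_n$. Hence the red graph has no subgraph of minimum degree $\ge n-1$. Following Brennan and Fan--Lin, we greedily delete vertices of red degree $<n-1$ in the current set. This yields an ordering in which every vertex has fewer than $n-1$ red neighbours among the later vertices. In the blue graph $B$, therefore, each vertex has more than $(N-i)-(n-1)$ blue neighbours among the remaining $N-i$ vertices. Consequently $B$ has at least about $N^2/4\approx n^2$ edges, and more usefully it is dense in a degenerate sense. I would use this to find a large set $U$, of size about $n$, with blue minimum degree about $n/2$ or more inside it. For a tree $T_n$ with many leaves, I would instead use Brennan's reduction: many leaves sharing parents give blue $K_{2,n}$-type structures, as in Huang--Zhang--Chen's forcing.

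\textbf{Step 2 (stability).} Since $B$ has no $C_m$ and $m$ is linear in $n$, I would apply regularity or the stability results for long odd cycles. A dense graph on $\approx 2n$ vertices with no odd cycle of length $m$ and minimum-degree-type density about $n/2$ must be close to bipartite; the relevant results are Woodall-type or Letzter's long-odd-cycle results, obtained via connected matchings in the reduced graph. The extremal configuration is then two red near-cliques $X,Y$ with $|X|,|Y|\approx n$ and blue edges between them. Since $|X|+|Y|=2n-1$, one side, say $X$, has $\ge n$ vertices. Its red graph is nearly complete, so every vertex of $X$ with red degree $\ge n-1$ inside $X$ can be used. The remaining atypical vertices have many blue edges inside $X$, or both blue and red edges across.

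\textbf{Step 3 (exact finish).} This is where the threshold $\lceil(2m-1)/3\rceil$ enters, and it is the main obstacle. Either $X$ contains a red $T_n$, obtained by embedding the tree greedily with the few bad vertices placed at leaves, or there is a blue edge, or a short blue path, inside a part. Such a path creates an odd cycle. Its length can be tuned by walking through the near-complete blue bipartite graph between $X$ and $Y$, where paths of every length up to $2\min(|X|,|Y|)$ exist. The danger is the path-type extremal example of Faudree et al.: a small $Y$ of size about $m-1-n/2$ cannot host a long enough cycle. Here the condition $m\le(3n+1)/2$ is exactly what guarantees enough room, namely that $|Y|\ge n-1$ forces a cycle of length $m$. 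When $Y$ is small, $X$ has about $2n-1-|Y|$ vertices, and we must show a red $T_n$ exists. For this I would split on the tree: if $T_n$ has a vertex adjacent to many leaves, use the $K_{2,n}$ argument; otherwise $T_n$ has a long bare path or many disjoint leaf edges and embeds in nearly complete red graphs. Balancing these cases precisely at the Faudree bound is the delicate part.
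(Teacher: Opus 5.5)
\textbf{Verdict: genuine gap.} Your proposal is an outline whose decisive step is explicitly left open (``balancing these cases precisely at the Faudree bound is the delicate part''). Worse, the structural claim of Step~2, on which that step rests, is false exactly where the threshold is decided.

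Forbidding one odd length $m$ does not forbid shorter odd cycles, so nothing forces the blue graph to be close to bipartite. The Faudree--Lawrence--Parsons--Schelp coloring behind $\lceil(2m-1)/3\rceil$ has a blue \emph{clique} on $m-1\approx 3n/2$ vertices, with the other $2n-m$ vertices joined in red to everything. It is far from bipartite, and it is not of the form ``two red near-cliques with blue edges between them.''

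Accordingly, the threshold does not enter through $|Y|\ge n-1$. A near-complete blue bipartite graph with parts of sizes $\ge n$ and $n-1$, plus one blue edge inside a part, already has odd cycles of all lengths up to about $2n-1$, so the condition $m\le(3n+1)/2$ plays no role there. The threshold enters through $2n-m\ge\lfloor n/2\rfloor\ge a$, where $a$ is the size of the smaller bipartition class of $T_n$: the vertices that cannot be absorbed into a long blue cycle must be numerous enough to host that class. Finally, regularity and stability arguments only control things up to $\varepsilon n$ errors, while $m\le(3n+1)/2$ leaves no additive slack. An exact argument is therefore unavoidable, and none is given.

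Step~1 is also too weak for what you need. The degeneracy ordering gives the $i$-th vertex only about $n+1-i$ blue neighbours among later vertices. By itself it guarantees just one vertex of blue degree $\ge n$ and roughly $n^2/2$ (not $n^2$) blue edges, and it does not justify your set $U$.

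The paper instead uses the dense Loebl--Koml\'os--S\'os theorem (Hladk\'y--Piguet) to obtain an $n$-set $W$ of vertices of blue degree $\ge n$, and this median-degree condition drives everything.
\begin{itemize}
\item For $m\le n+1$, a Li--Xu path through $W$, Bondy's criterion and a weighted clique count give every blue cycle length up to $n+1$.
\item For $n+2\le m\le 2n-\lfloor n/2\rfloor$, degree-sum closures (\v{C}ada et al.) make $W$ a blue clique while preserving the absence of a cycle of length $\ge m$ through $W$. An exact-adjustment lemma turns any such cycle back into a blue $C_m$.
\item A maximal family of blue $W$-ears then leaves at least $2n-m\ge a$ free vertices. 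Each of these has all its blue neighbours in $W\setminus W_{\mathrm{int}}$ inside a single class of a partition of $W\setminus W_{\mathrm{int}}$ into at least $a+1$ classes.
\item That red structure lets one embed $T_n$ minus the leaves of its larger class, with the smaller class in the free set, via a Hall-type partition-defect lemma. Those leaves are then placed by Hall's theorem.
\end{itemize}
What your outline is missing is an exact treatment of this ``blue clique plus small free set'' configuration.
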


Since $C\le \lceil \frac{2m-1}3\rceil$ for all sufficiently large odd $m$, Theorem~\ref{thm:main} gives $f(m)\le \lceil \frac{2m-1}3\rceil$. Together with the lower bound \eqref{eq:path-lower}, this yields the following corollary.

\begin{corollary}\label{cor:f(m)}
For all sufficiently large odd $m$,
$f(m)=\left\lceil\frac{2m-1}{3}\right\rceil.$
\end{corollary}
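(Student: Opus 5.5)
The corollary has two halves: the upper bound $f(m)\le\lceil\frac{2m-1}{3}\rceil$ comes from Theorem~\ref{thm:main}, and the lower bound from the path result of Faudree, Lawrence, Parsons and Schelp. Throughout, write $N=N(m)=\lceil\frac{2m-1}{3}\rceil$. Recall that $f(m)$ is the least integer such that $R(T_n,C_m)=2n-1$ holds for every tree $T_n$ and every $n\ge f(m)$.

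\emph{Upper bound.} Let $C$ be the absolute constant from Theorem~\ref{thm:main}. Since $N(m)\ge\frac{2m-1}{3}\to\infty$, there is $m_0$ such that $N(m)\ge C$ for all odd $m\ge m_0$. We may also take $m_0\ge 5$. For such $m$ we have $\max\{C,N\}=N$. Theorem~\ref{thm:main} therefore gives $R(T_n,C_m)=2n-1$ for every $n\ge N$ and every $n$-vertex tree $T_n$. By the definition of $f(m)$ as a least integer, $f(m)\le N$.

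\emph{Lower bound.} It suffices to exhibit one $n$ with $n=N-1$ and one tree $T_n$ for which $R(T_n,C_m)\ne 2n-1$. Then no integer $\le N-1$ can serve as $f(m)$. Take $T_n=P_n$ with $n=N-1$.

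\begin{itemize}
\item The path formula applies. Since $n<\frac{2m-1}{3}$ and $n$ is an integer, $3n\le 2m-2$, so $n\le m$. For $m\ge 5$ we have $N\ge3$, hence $n\ge 2$. So $2\le n\le m$ with $m$ odd.
\item The second term dominates. Using $\lfloor n/2\rfloor\ge\frac{n-1}{2}$,
\[
m-1+\left\lfloor\frac n2\right\rfloor \;\ge\; m-1+\frac{n-1}{2} \;>\; 2n-1 .
\]
The strict inequality is equivalent to $2m>3n+1$, which follows from $3n\le 2m-2$.
\end{itemize}

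Hence $R(P_n,C_m)=m-1+\lfloor n/2\rfloor>2n-1$, and so $f(m)\ge N$. This is exactly \eqref{eq:path-lower}.

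Combining the two bounds gives $f(m)=N$ for all odd $m\ge m_0$.

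There is no real obstacle here; all the difficulty lies in Theorem~\ref{thm:main}. The only points needing care are the boundary arithmetic and the range conditions. One must check that the inequality $3n\le 2m-2$ for the integer $n=N-1$ really gives the strict inequality $2m>3n+1$. One must also check that the hypothesis $2\le n\le m$ of the path formula holds at $n=N-1$.
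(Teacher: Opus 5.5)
Your proof is correct and is essentially the paper's argument. The upper bound comes from Theorem~\ref{thm:main} once $\lceil\frac{2m-1}{3}\rceil\ge C$, and the lower bound comes from the Faudree--Lawrence--Parsons--Schelp path formula. You make explicit two things the paper leaves implicit: the choice $n=\lceil\frac{2m-1}{3}\rceil-1$ and the check that $2\le n\le m$.
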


For all sufficiently large odd $m$, the corollary resolves Problem~\ref{pb:be} of Burr, Erd\H{o}s, Faudree, Rousseau and Schelp, which had remained open since 1982. It also confirms Conjecture~\ref{conj:exact-threshold} of Huang, Zhang and Chen in this range.

The proof of Theorem~\ref{thm:main} combines the dense
Loebl--Koml\'os--S\'os theorem with a cycle-extension and closure
argument designed to preserve both a prescribed set of high-degree
vertices and a lower bound on the cycle length. The main difficulty
lies in the long-cycle range \(m\ge n+2\), which is beyond the reach
of the median-degree pancyclicity lemma; our treatment of this range
constitutes the main new contribution of the proof. To overcome this difficulty,
we first construct an auxiliary linear forest from a maximal family
of blue ears, then apply a new tree-embedding lemma to place a large
subtree in the red graph, and finally use Hall's theorem to embed the
remaining leaves. For the short-cycle range \(m\le n+1\), a
median-degree pancyclicity lemma suffices. Together, these tools yield
a matching upper bound for all sufficiently large odd \(m\).

The remainder of the paper is organized as follows.  Section~\ref{sec:prelim} collects the
preliminary results used in the proof.  Section~\ref{sec:median-pancyclic} proves the
median-degree pancyclicity lemma, which handles the short-cycle range $m\le n+1$. 
Section~\ref{sec:long-cycle} establishes the key proposition for the
relevant long-cycle range
$
n+2\le m\le 2n-\left\lfloor\frac n2\right\rfloor,
$
using the cycle-extension and auxiliary-forest strategy outlined above.
Section~\ref{sec:main-proof} applies these two results to their respective
ranges and thereby completes the proof of Theorem~\ref{thm:main}. Finally, we conclude
with a brief discussion and mention a few open problems.

\section{Preliminaries}\label{sec:prelim}

All graphs are finite and simple.  For a graph $G$, a vertex $v\in V(G)$, and
a set $U\subseteq V(G)$, write $G[U]$ for the subgraph induced by $U$ and
\[
 N_G(U)=\{v\in V(G)\setminus U:uv\in E(G)\text{ for some }u\in U\}.
\]
We also write $N_G(v)$ for the neighborhood of $v$, $d_G(v)=|N_G(v)|$, and
$d_U(v)=|N_G(v)\cap U|$. Let $\delta(G)$ denote the minimum degree of $G$.
The order of a graph $G$ is $|V(G)|$, and its complement is denoted by $\overline G$.
We write \(K_r\) for the complete graph of order \(r\) and \(K_{r,s}\) for the
complete bipartite graph with part orders \(r\) and \(s\); a clique is a
complete subgraph. A \emph{linear forest} is a forest whose components are paths and isolated vertices.

A cycle in a graph $G$ is \emph{spanning} if it contains every vertex of $G$.
The graph $G$ is \emph{Hamiltonian} if it
contains a spanning cycle, called a \emph{Hamilton cycle}.  Equivalently, if
$G$ has order $N$, then its vertices can be ordered as $v_1,\ldots,v_N$ so
that
\[
 v_iv_{i+1}\in E(G)\quad\text{for }1\le i<N,
 \qquad\text{and}\qquad
 v_Nv_1\in E(G).
\]
An $xy$-path is a path with endpoints $x$ and $y$; it is a Hamilton
$xy$-path if it contains all vertices of $G$.
A graph is \emph{pancyclic} if it contains a cycle of every order between
$3$ and its order.

The following theorem of Hladk\'y and Piguet~\cite{HladkyPiguet}, which is the dense case of the Loebl--Koml\'os--S\'os theorem, gives a degree condition under which a graph contains every tree with a prescribed number of edges.

\begin{lemma}[Hladk\'y and Piguet \cite{HladkyPiguet}]\label{lem:dense-lks}
For every $\alpha>0$, there exists an integer $N_0$ such that the following
holds.  If $N\ge N_0$, $\alpha N<k<N$, and an $N$-vertex graph $G$ has at
least $N/2$ vertices of degree at least $k$, then $G$ contains every tree
with $k$ edges.
\end{lemma}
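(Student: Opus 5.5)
This lemma is quoted from Hladk\'y and Piguet, and in the paper we would simply cite it. To reprove it, I would follow the standard regularity-method strategy for the dense Loebl--Koml\'os--S\'os problem, in three stages: a structural stage on the reduced graph, a tree-decomposition stage, and an embedding stage.

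\textbf{Reduction to a cluster graph.} Fix $\alpha>0$ and choose $\varepsilon\ll\eta\ll\alpha$. Call the vertices of degree at least $k$ \emph{large}, and let $L$ be the set of large vertices, so $|L|\ge N/2$. Apply Szemer\'edi's regularity lemma to $G$ with parameter $\varepsilon$. Discard edges inside clusters, edges of irregular pairs and edges of density below $\eta$. The resulting cluster graph $\Gamma$ has a bounded number of vertices, and it inherits an approximate version of the hypothesis. Specifically, at least roughly half of the clusters are \emph{heavy}: most of their vertices are large, and their weighted degree in $\Gamma$ is at least $(1-\sqrt{\eta})k/N$ times the number of clusters.

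\textbf{Structural stage.} Working in $\Gamma$, I would prove a structural statement: $\Gamma$ contains either
\begin{enumerate}[(a)]
\item a heavy cluster $A$ together with a connected fractional matching covering weight $\gtrsim k$ inside the neighbourhood of $A$, or
\item two adjacent heavy clusters $A,B$ whose neighbourhoods are joined by a matching structure of total weight $\gtrsim k$.
\end{enumerate}
The proof would take a maximum fractional matching $M$ in $\Gamma$ and use the degree condition on heavy clusters. Each heavy cluster either sees much of $V(M)$ or has many neighbours outside $V(M)$. The latter case contradicts maximality unless those neighbours are adjacent only to heavy clusters, and in that case they can serve as the ``other side'' of the embedding. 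This mirrors the Ajtai--Koml\'os--Szemer\'edi argument for the approximate Erd\H{o}s--S\'os setting. I expect this stage to be the main obstacle. The halving threshold $N/2$ is tight, so every loss must be absorbed by the slack in $k\le N$ and by the $\varepsilon,\eta$ errors. Making the matching, rather than a single cluster, carry the weight is the delicate part of the argument.

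\textbf{Tree decomposition.} Root $T$ and cut it into a bounded number of ``shrubs'' of size at most $\beta k$ hanging from a set of at most $O(1/\beta)$ cut vertices, for some $\beta\ll\eta$. Two-colour the tree by bipartition parity, and decide which colour class should be placed in the heavy clusters $A$ (and $B$); the embedding will place the smaller parity class of cut vertices there.

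\textbf{Embedding stage.} Embed the cut vertices into typical large vertices of $A$ or $B$. Then embed each shrub greedily into a regular pair of the matching structure, alternating sides according to the bipartition. Before each step, check via regularity that the pair still has $\Omega(\eta\cdot\text{cluster size})$ free typical vertices. Since the total weight available is at least $(1-o(1))k$ and the tree has $k+1$ vertices, a small reserve is needed. I would obtain it from the large vertices themselves: they have degree at least $k$, so the last $O(\beta k)$ vertices can be embedded directly into the neighbourhoods of embedded large vertices, outside the used set. Finally, take $N_0$ large enough for the regularity lemma bounds to apply.
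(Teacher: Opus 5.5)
The paper does not prove this lemma; it quotes it from Hladk\'y and Piguet, and the citation is the whole argument. As a standalone reproof, your sketch has a genuine gap exactly where this statement differs from its approximate version.

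Your pipeline is regularity lemma, a matching-type configuration in the cluster graph, shrub decomposition, then embedding shrubs into regular pairs. That is the strategy for the \emph{approximate} dense Loebl--Koml\'os--S\'os theorem, which assumes slack such as $(1+\eta)N/2$ vertices of degree at least $(1+\eta)k$. In the lemma as stated, both thresholds $N/2$ and $k$ are sharp, so there is no slack to absorb regularity losses. As you note yourself, heavy clusters only have weighted degree about $(1-\sqrt\eta)k$. So the regular-pair embedding places only about $(1-O(\sqrt\eta))k$ of the $k+1$ tree vertices. The leftover is of order $\sqrt\eta\,k$, far more than the $O(\beta k)$ reserve you budget for, since you took $\beta\ll\eta$.

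More importantly, the reserve step itself fails. It is true that if only $k+1-r$ vertices are used, a large vertex still has at least $r$ free neighbours. So \emph{leaves} whose parent is embedded at a large vertex can be finished greedily. But the leftover vertices form shrubs with internal structure. Take a path $c\,y_1y_2\cdots$ hanging from a cut vertex $c$ with $\phi(c)$ large. Then $y_1$ goes to some free neighbour $w$ of $\phi(c)$, and $y_2$ needs a free neighbour of $w$. Nothing in the hypothesis controls $w$: it may be a small vertex whose few neighbours all lie in the nearly exhausted set of large vertices, and then the greedy step stalls.

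What is missing is a genuine exact argument. One must show a dichotomy:
\begin{itemize}
\item either the cluster graph contains an embedding configuration with a linear surplus, which then absorbs all $\varepsilon,\eta$-losses;
\item or $G$ is close to one of the extremal configurations, which must be handled by a separate exact embedding that exploits that structure.
\end{itemize}
This extremal part is where much of the difficulty of the exact dense theorem lies, and your plan has no substitute for it. The structural stage, which you yourself flag as the main obstacle, is also left unproved.
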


The next result gives a path through all vertices whose degrees are at least
the median threshold.

\begin{lemma}[Li and Xu~\cite{LiXu}]\label{lem:li-xu}
Let $G$ be a graph of order $N$, and let $x,y$ be distinct vertices such that
$
                         d_G(x), \; d_G(y)\ge\frac{N+1}{2}.
$
Then $G$ contains an $(x,y)$-path containing every vertex $z$ with
$d_G(z)\ge(N+1)/2$.
\end{lemma}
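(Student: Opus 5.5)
Since Lemma~\ref{lem:li-xu} is a cited result, we only sketch how one could prove it, using an Ore-type insertion argument. Write $H=\{z\in V(G): d_G(z)\ge (N+1)/2\}$, so $x,y\in H$. The basic counting fact is this: for any two vertices $u,v\in H$,
\[
|N_G(u)\cap N_G(v)|\ge d_G(u)+d_G(v)-(N-2)\ge 3
\]
whenever $u\ne v$ (here $N_G(u)\cup N_G(v)$ lies in $V(G)\setminus\{u,v\}$ if $uv\notin E(G)$, and the bound only improves otherwise). So $H$ lies inside one component, and some $x$--$y$ path exists. Among all $x$--$y$ paths, we fix one, $P=x=p_1p_2\cdots p_\ell=y$, that first maximizes $|V(P)\cap H|$ and then maximizes $\ell$. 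We assume some $h\in H\setminus V(P)$ and aim for a contradiction by building a path that covers $V(P)\cap H$ together with $h$.

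\textbf{Step 1: local insertion.} If $h$ is adjacent to two consecutive vertices $p_i,p_{i+1}$, replace the edge $p_ip_{i+1}$ by $p_ihp_{i+1}$; this contradicts maximality. Similarly, let $R$ be the component of $G-V(P)$ containing $h$. If two consecutive vertices of $P$ both have neighbours in $R$, we reroute $P$ through $R$. We choose the route through $R$ to pass through $h$; this is possible by routing within $R$ via $h$. If that fails, we route through some $R$-vertex without losing any $H$-vertex, which contradicts the maximality of $\ell$. Hence the attachment set $A=N_G(R)\cap V(P)$ contains no two consecutive vertices of $P$.

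\textbf{Step 2: crossing (rotation) argument.} For each $p_i\in A$ with $i<\ell$, consider the successor $p_{i+1}$. If $p_{i+1}\notin H$, we may delete it: we reroute $P$ as $p_1\cdots p_i\,(\text{through } R \text{ via } h)\,\cdots$, which requires a second attachment point. The key case is therefore when the successors $p_{i+1}$ lie in $H$. For two attachment points $p_i,p_j$ with $i<j$, the standard Ore crossing shows that if $p_{i+1}p_{j+1}\in E(G)$, then
\[
p_1\cdots p_i\,(R\ni h)\,p_j p_{j-1}\cdots p_{i+1}p_{j+1}\cdots p_\ell
\]
is an $x$--$y$ path covering everything plus $h$, contradicting maximality. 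Hence the successor set $A^+$, together with a vertex of $R$, forms an independent-type configuration. We then sum degrees. We take $h$ and a vertex $p_{i+1}\in A^+\cap H$ (or $y$-side analogues). Their neighbourhoods are almost disjoint on $P$ and disjoint off $P$, since $p_{i+1}$ has no neighbour in $R$. This gives $d(h)+d(p_{i+1})\le N$, contradicting $d(h)+d(p_{i+1})\ge N+1$.

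\textbf{Main obstacle.} The delicate step is Step~2 when some successors $p_{i+1}$ are \emph{not} in $H$ and cannot be used in the degree sum. Deleting them to shorten $P$ is allowed, because $H$-coverage is the primary objective, but we must control the endpoints $x,y$, which are fixed. The first thing we would try is to add an auxiliary vertex $w$ adjacent only to $x$ and $y$. In $G+w$ (order $N+1$) the vertices of $H$ have degree at least $(N+1)/2$, and an $x$--$y$ path through $H$ in $G$ is equivalent to a cycle through $H\cup\{w\}$ in $G+w$. This reduces the problem to a Shi/Bondy-type statement: in a graph of order $n'$, the vertices of degree at least $n'/2$ lie on a common cycle. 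We would then prove that statement by the cycle version of the insertion-and-crossing argument above, treating $w$ as a forced vertex whose two neighbours $x,y$ both lie in $H$.
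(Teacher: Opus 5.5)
Your sketch has a genuine gap. The paper quotes this lemma from Li and Xu without proof, so the sketch must stand on its own, and the situation you flag as the ``main obstacle'' is the real content of the lemma. Neither of your proposed ways around it works. Your crossing count for $h$ and $p_{i+1}$ is correct when $p_{i+1}\in H$.

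The deletion move in Step~2 is not valid. To skip a non-$H$ successor $p_{i+1}$ you must leave $R$ at a second attachment vertex $p_j$, which discards every vertex strictly between $p_i$ and $p_j$. This helps only if that segment is $H$-free and the route through $R$ really passes through $h$, and you can guarantee neither.

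The auxiliary-vertex reduction does not remove the difficulty either. In $G+w$ the vertex $w$ has degree $2$, so it is not among the vertices of degree at least $n'/2$, and a cycle through those vertices need not use $w$. The Shi-type statement also needs $2$-connectivity: a double star whose two centres have degree $n'/2$ has no cycle at all. Forcing $w$ onto the cycle is a new statement, and proving it by the cycle version of your argument meets exactly the same low-degree successors.

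There are two smaller slips. For adjacent $u,v\in H$ one only gets $|N(u)\cap N(v)|\ge d(u)+d(v)-N\ge1$, so ``the bound only improves otherwise'' is false, although connectivity still follows. And a route inside $R$ through $h$ between two prescribed attachment vertices need not exist, although your fallback of merely lengthening $P$ rescues Step~1.

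The missing idea is to pair $h$ not with the immediate successor of an attachment vertex, but with the next vertex of $H$ after a neighbour of $h$. This uses the fact that vertices outside $H$ may be discarded freely.

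Take $P=p_1\cdots p_\ell$ maximizing only $|V(P)\cap H|$, and let $h\in H\setminus V(P)$. Suppose $h$ has a neighbour $p_i$ with $i<\ell$, and let $p_\sigma$ be the first vertex of $H$ after $p_i$; it exists because $y\in H$. Since $p_{i+1},\dots,p_{\sigma-1}\notin H$, rerouting through $h$ shows at once three things: $hp_\sigma\notin E(G)$; $h$ has no neighbour strictly between $p_i$ and $p_\sigma$; and $h$ and $p_\sigma$ have no common neighbour off $P$.

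Moreover, for every other neighbour $p_j$ of $h$ with $j<\ell$ we get $p_\sigma p_{j+1}\notin E(G)$. Otherwise
\[
p_1\cdots p_i\,h\,p_jp_{j-1}\cdots p_\sigma\,p_{j+1}\cdots p_\ell\quad(j>i)
\qquad\text{or}\qquad
p_1\cdots p_j\,h\,p_ip_{i-1}\cdots p_{j+1}\,p_\sigma\cdots p_\ell\quad(j<i)
\]
is an $x$--$y$ path through $h$ omitting only $p_{i+1},\dots,p_{\sigma-1}$.

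Count $p_\sigma$ itself and these pairwise distinct $p_{j+1}$ as non-neighbours of $p_\sigma$ on $P$; this gives $d_P(h)+d_P(p_\sigma)\le\ell+1$. The neighbourhoods off $P$ are disjoint subsets of $V(G)\setminus(V(P)\cup\{h\})$. Hence $d(h)+d(p_\sigma)\le N$, a contradiction. If $y$ is the only neighbour of $h$ on $P$, use the last vertex of $H$ before $y$ instead, which exists since $x\in H$.

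If $h$ has no neighbour on $P$, then every $s\in H\cap V(P)$ is non-adjacent to $h$. Hence it has at least $3$ common neighbours with $h$, all off $P$. For two consecutive $H$-vertices $s,s'$ of $P$, choose distinct $z\in N(s)\cap N(h)$ and $z'\in N(s')\cap N(h)$. Replacing the segment between $s$ and $s'$ by $szhz's'$ again contradicts maximality.

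In this form neither the secondary maximization of $\ell$ nor the component $R$ is needed.
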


We shall also need the following classical pancyclicity criterion for Hamiltonian graphs.

\begin{lemma}[Bondy~\cite{Bondy1971}]
\label{lem:bondy-cycle-edge}
Let $C$ be a Hamilton cycle of a graph $G$ of order $N$.
If $C$ contains two consecutive vertices $x$ and $y$ such that
$
d_G(x)+d_G(y)\ge N+1,
$
then $G$ is pancyclic.
\end{lemma}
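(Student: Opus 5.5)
This is Bondy's classical criterion, and in the paper it may simply be cited. If I were to prove it, I would argue length by length via a counting argument on the neighbourhoods of $x$ and $y$ along $C$. Write the Hamilton cycle as $C=u_0u_1\cdots u_{N-1}u_0$ with $x=u_0$ and $y=u_1$. The case $\ell=N$ is $C$ itself, so fix $3\le\ell\le N-1$. Suppose for contradiction that $G$ has no cycle of length $\ell$; the aim is to show $d_G(x)+d_G(y)\le N$.

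\textbf{Step 1: forbidden pairs.} I would identify two families of forbidden pairs of neighbours, each giving a cycle of length $\ell$.
\begin{itemize}
\item \emph{Short configuration.} If $yu_j\in E(G)$ and $xu_{j+\ell-3}\in E(G)$ with $2\le j$ and $j+\ell-3\le N-1$, then $x\,y\,u_j u_{j+1}\cdots u_{j+\ell-3}\,x$ is a cycle. It uses the edge $xy$ and $\ell-2$ consecutive vertices of $C$, so it has length $\ell$.
\item \emph{Crossing configuration.} If $xu_i\in E(G)$ and $yu_k\in E(G)$ with $2\le i<k\le N-1$ and $k-i-1=N-\ell$, then $x\,u_i u_{i-1}\cdots u_2\,y\,u_k u_{k+1}\cdots u_{N-1}\,x$ is a cycle. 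It omits exactly the $N-\ell$ vertices $u_{i+1},\dots,u_{k-1}$, so it also has length $\ell$.
\end{itemize}
Hence, for each $j$ with $yu_j\in E(G)$, the vertex $x$ is non-adjacent to $u_{j+\ell-3}$ and to $u_{j-(N-\ell+1)}$, whenever these indices fall in the admissible ranges. Modulo $N$, the two shifts are $+(\ell-3)$ and $+(\ell-1)$.

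\textbf{Step 2: counting.} Let $A=\{i:xu_i\in E(G)\}$ and $B=\{j:yu_j\in E(G)\}$, as subsets of $\{0,\dots,N-1\}$. I would define an injection from $B$, minus a bounded set of boundary indices, into the complement of $A$. This uses the first shift on the part of $B$ with $j\le N+2-\ell$ and the second shift on the part with $j\ge N-\ell+3$. These two ranges are complementary and their images land in disjoint index intervals: the first shift lands in $[\ell-1,N-1]$ and the second in $[2,\ell-2]$, so the map is injective.

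The exceptional indices are those involving $x$ and $y$ themselves ($0\in B$, $1\in A$) and $j\in\{0,1\}$. These I would handle by direct bookkeeping: each such index either contributes $1$ to only one of $|A|,|B|$ or is matched with one of $u_0,u_1$, which lie outside the images of the injection. The outcome should be $|A|+|B|\le N$, contradicting $d_G(x)+d_G(y)\ge N+1$.

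\textbf{Main obstacle.} The delicate step is the boundary bookkeeping in Step 2, where the counting is tight. This includes the degenerate lengths $\ell=3$ (the short configuration is just a common neighbour of $x$ and $y$) and $\ell=N-1$, where the crossing configuration skips a single vertex. I would check those two cases separately by hand. For $\ell=3$: if $x$ and $y$ have no common neighbour then $d(x)+d(y)\le N$ directly. For $\ell=N-1$: a one-vertex skip $u_{i+1}$ with $xu_i,yu_{i+2}\in E(G)$ is the standard Ore-type crossing argument. The general case then follows the uniform shift argument above. Should the index juggling fail to close exactly, the fallback is to invoke Bondy~\cite{Bondy1971} directly, as the paper does.
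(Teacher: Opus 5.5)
Your proposal is correct. The paper gives no proof of this lemma and only cites \cite{Bondy1971}; your argument is the standard counting proof behind that citation, so it is a self-contained replacement for the reference rather than a competing route.

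The bookkeeping you flag as delicate closes exactly, with no case analysis. Define $\sigma(j)=j+\ell-3$ on $\{2,\dots,N+2-\ell\}$ and $\sigma(j)=j-(N-\ell+1)$ on $\{N-\ell+3,\dots,N-1\}$. The images are $\{\ell-1,\dots,N-1\}$ and $\{2,\dots,\ell-2\}$, so $\sigma$ is a bijection of $\{2,\dots,N-1\}$ onto itself. Your two configurations show $\sigma(j)\notin A$ for every $j\in B\cap\{2,\dots,N-1\}$, hence
\[
|A\cap\{2,\dots,N-1\}|+|B\cap\{2,\dots,N-1\}|\le N-2 .
\]
Since $0\in B\setminus A$ and $1\in A\setminus B$, the indices $0$ and $1$ contribute exactly $2$, giving $d_G(x)+d_G(y)\le N$.

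The lengths $\ell=3$ and $\ell=N-1$ need no separate check. For $\ell=3$ the second range is empty and $\sigma$ is the identity, which is exactly the common-neighbour argument. For $\ell=N-1$ the same formulas apply verbatim, and $\ell\le N-1$ already guarantees $k\ge i+2$ in the crossing configuration. So the fallback to the citation is unnecessary.

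What your version buys is that the paper's two uses of this lemma, in Lemmas~\ref{lem:median-pancyclic} and~\ref{lem:exact-adjustment}, become fully elementary. The cost is one paragraph that the authors chose to outsource.
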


The above pancyclicity lemma yields the following standard consequence.

\begin{corollary}[Bondy~\cite{Bondy1971}]\label{lem:bondy}
If an $N$-vertex graph $G$ satisfies $\delta(G)\ge N/2$, then either $G$ is pancyclic or $N$ is even and $G=K_{N/2,N/2}$.
\end{corollary}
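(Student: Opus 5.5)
This corollary is derived directly from Lemma~\ref{lem:bondy-cycle-edge}, with Dirac's theorem supplying the Hamilton cycle.

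\emph{Step 1: obtain a Hamilton cycle.} Since $\delta(G)\ge N/2$, we may assume $N\ge3$; the trivial cases $N\le2$ are either vacuous or degenerate and are excluded by convention. Dirac's theorem then gives a Hamilton cycle $C=v_1v_2\cdots v_Nv_1$ in $G$.

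\emph{Step 2: the non-extremal case.} Any two consecutive vertices $v_i,v_{i+1}$ of $C$ satisfy $d(v_i)+d(v_{i+1})\ge N$. If some consecutive pair has degree sum at least $N+1$, Lemma~\ref{lem:bondy-cycle-edge} shows that $G$ is pancyclic. So we may assume that every consecutive pair on $C$ has degree sum exactly $N$.

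\emph{Step 3: the extremal case.} Combined with $\delta\ge N/2$, the equality $d(v_i)+d(v_{i+1})=N$ for every $i$ forces $d(v_i)=N/2$ for all $i$. Hence $N$ is even and $G$ is $N/2$-regular. We now show that $G=K_{N/2,N/2}$.

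Suppose $G$ contains a triangle; we aim to contradict the degree-sum condition. The standard Bondy argument runs as follows. For a Hamilton cycle with $d(v_1)+d(v_N)\ge N$, if $G$ contains no cycle of some length $\ell$, then counting the crossing-chord pairs $v_1v_{j}$ and $v_Nv_{j+\ell-N-1}$ (suitably indexed) shows that the degree sum is at most $N-1$. Bondy's proof additionally shows that equality $N$ is possible only when $G$ is $K_{N/2,N/2}$.

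To avoid redoing that analysis, use a simpler route. In an $N/2$-regular graph with a Hamilton cycle, suppose $G$ is not bipartite. Take an edge $uw$ with $u,w$ at even distance along $C$, and consider the Hamilton cycle $C'$ obtained by a rotation-type exchange. For instance, if $v_1v_j\in E$ and $v_2v_{j+1}\in E$, then
\[
v_1v_j v_{j-1}\cdots v_2 v_{j+1}\cdots v_N v_1
\]
is another Hamilton cycle. The aim is to find one in which some consecutive pair has degree sum $N+1$. This is impossible by regularity, so the contradiction must instead come from a pancyclicity count. If $G$ is bipartite and $N/2$-regular on $N$ vertices, each part has size $N/2$, and every vertex is adjacent to the whole opposite part, so $G=K_{N/2,N/2}$.

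\emph{Main obstacle.} The main obstacle is the regular non-bipartite case. There I would simply redo Bondy's counting: fix $x=v_1$, $y=v_N$, and an odd length $\ell$ with no $\ell$-cycle. Map the neighbours of $x$ injectively to non-neighbours of $y$ via the index shift $j\mapsto j+\ell-2$. Equality in the resulting bound then forces the neighbourhoods of $x$ and $y$ to be exactly alternate vertices of $C$. Propagating this around $C$ shows that $N_G(v_i)=\{v_j: j\not\equiv i \pmod 2\}$, so $G=K_{N/2,N/2}$.
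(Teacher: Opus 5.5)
\textbf{There is a genuine gap: the non-bipartite $N/2$-regular case is never proved.} Your Steps 1 and 2 are correct, and so are the reduction to the case that $N$ is even and $G$ is $N/2$-regular and the bipartite subcase. But this is exactly where Lemma~\ref{lem:bondy-cycle-edge} stops being useful. In an $N/2$-regular graph every consecutive pair on every Hamilton cycle has degree sum exactly $N$, so the lemma never applies to $G$. Everything the corollary says beyond the lemma therefore sits in the non-bipartite regular case, and your proposal does not prove it.

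Your preceding remarks do not help with this case. The rotation idea is abandoned by your own account, since regularity makes it hopeless. Your description of Bondy's count is also inaccurate in two ways:
\begin{itemize}
\item the count gives $d(x)+d(y)\le N$, not $\le N-1$; otherwise $K_{N/2,N/2}$ itself would violate it;
\item equality for a single pair does not force $K_{N/2,N/2}$, since a non-complete Hamiltonian bipartite graph can have $d(x)+d(y)=N$ at a consecutive pair.
\end{itemize}

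\textbf{Your final sketch does not close the gap either.} There are three problems.
\begin{itemize}
\item You may not take the missing length $\ell$ to be odd. Ruling out a missing even length is part of what must be shown.
\item A single shift is not an injection on all of $N(x)$. With $x=v_1$ and $y=v_N$, the rerouting $v_1v_jv_{j+1}\cdots v_{j+\ell-3}v_Nv_1$ covers only $2\le j\le N-\ell+2$. The remaining neighbours need $v_1v_2\cdots v_cv_Nv_{N-1}\cdots v_jv_1$ with $c=j-N+\ell-1$. Moreover, $v_2$ and $v_N$ are both sent to $v_{\ell-1}$.
\item Most importantly, equality in this count does not say that $N(x)$ and $N(y)$ are alternate vertices of $C$. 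It says only that the non-neighbours of $y$ in $\{v_2,\ldots,v_{N-1}\}$ form a piecewise shifted copy of $N(x)$, with shift $\ell-3$ on one range and $-(N-\ell+1)$ on the other.
\end{itemize}

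To reach bipartiteness you must combine these relations over all $N$ consecutive pairs, which regularity makes available, and then analyse the resulting system.
\begin{itemize}
\item For $\ell=3$ or $\ell=N-1$ the relations chain quickly to ``$v_iv_{i+d}\in E(G)$ iff $d$ is odd''.
\item For general $\ell$ (for example every even $\ell$) the shifts split into several unanchored chains. Regularity and the symmetry of adjacency must then be brought in, and real work remains.
\end{itemize}
This analysis is the substance of Bondy's theorem, and it is missing from your proposal.

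\textbf{How to finish.} Either carry out that analysis, or deduce the corollary from Bondy's theorem that a Hamiltonian graph with at least $N^2/4$ edges is pancyclic or equal to $K_{N/2,N/2}$. The hypotheses follow from Dirac's theorem and $|E(G)|\ge N\delta(G)/2\ge N^2/4$.

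The paper itself gives no proof here and simply cites Bondy~\cite{Bondy1971}. In its only application, $N=2n-1$ and $\delta\ge n=(N+1)/2$, so every consecutive pair has degree sum at least $N+1$. There your Steps 1--2 already suffice.
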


\section{The short-cycle range}\label{sec:median-pancyclic}

This section is devoted to the short-cycle regime \(m\le n+1\).
We prove a pancyclicity lemma (stated for a general parameter \(q\))
which guarantees all cycle lengths up to \(q+1\);
in the main theorem this will be applied with \(q=n\),
thereby covering every desired cycle length \(m\le n+1\).

We first record a weighted counting lemma used in the proof below.

\begin{lemma}\label{lem:weighted-cover}
Let $G$ be a graph of order $q$, and let $D_1,\ldots,D_p$ be cliques of $G$,
not necessarily distinct.  If every vertex $v\in V(G)$ is contained in at
least $d_G(v)+1$ cliques $D_i$, then $p\ge q$.
\end{lemma}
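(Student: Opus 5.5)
We want: if every vertex $v$ lies in at least $d_G(v)+1$ of the cliques $D_1,\dots,D_p$, then $p\ge q$. The plan is a double-counting argument with weights. Each vertex $v$ gets weight $1/(d_G(v)+1)$. The total weight is then bounded from both sides.

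\emph{Lower bound on the total weight.} Each vertex contributes $1$ once we split its weight among the cliques containing it. Precisely, assign to each pair $(v,D_i)$ with $v\in D_i$ the weight $1/(d_G(v)+1)$. Summing over the cliques containing a fixed $v$ gives at least $1$, by the hypothesis that $v$ lies in at least $d_G(v)+1$ of them. Hence the total sum $S$ of these pair weights satisfies $S\ge q$.

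\emph{Upper bound on the total weight.} Now sum clique by clique. Fix a clique $D_i$. Every $v\in D_i$ is adjacent to the other $|D_i|-1$ vertices of $D_i$. So $d_G(v)\ge |D_i|-1$, which gives $1/(d_G(v)+1)\le 1/|D_i|$. Therefore the contribution of $D_i$ is
\[
\sum_{v\in D_i}\frac1{d_G(v)+1}\le |D_i|\cdot\frac1{|D_i|}=1,
\]
so $S\le p$. Comparing the two bounds gives $p\ge S\ge q$.

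The only point needing care is that the cliques may repeat. The sum is indexed by the $p$ indices $i$, not by distinct cliques, so multiplicities are counted consistently in both bounds. Empty cliques, if allowed, contribute $0\le 1$. There is no real obstacle: the key step is choosing the weight $1/(d_G(v)+1)$ so that each clique contributes at most $1$ while each vertex contributes at least $1$.
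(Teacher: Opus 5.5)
Your proposal is correct and matches the paper's proof: the same weight $1/(d_G(v)+1)$, double counting $\sum_{v} p(v)\,w(v)$ (your $S$), with each clique contributing at most $1$ because $d_G(v)\ge |D_i|-1$ and each vertex contributing at least $1$ by hypothesis. Your remarks that repeated cliques are counted by index and that empty cliques contribute $0$ are accurate and cover the only bookkeeping points.
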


\begin{proof}
Assign to each vertex $v\in V(G)$ the weight $w(v)=\frac{1}{d_G(v)+1}.$
For each vertex $v\in V(G)$, let $p(v)$ denote the number of cliques
$D_i$ containing $v$. We bound the sum
\[
\sum_{v\in V(G)}p(v)w(v)
\]
from above and below.

First, fix a clique $D_i$. For every $v\in D_i$, since $D_i$ is a clique of
$G$, we have $d_G(v)+1\ge |D_i|$; hence
$\sum_{v\in D_i}w(v)\le |D_i|\cdot \frac1{|D_i|}=1.$
Summing over all \(p\) cliques, and noting that each vertex \(v\) is counted
once for each clique containing it, we obtain
\[
\sum_{v\in V(G)} p(v)w(v)=\sum_{i=1}^{p}\sum_{v\in D_i}w(v)\le p.
\]

On the other hand, the hypothesis $p(v)\ge d_G(v)+1$ gives
$
p(v)w(v)=p(v)\cdot \frac1{d_G(v)+1}\ge 1
$
for every $v\in V(G)$. Hence \[\sum_{v\in V(G)} p(v)w(v)\ge q.\]

Thus $p\ge q$ follows.
\end{proof}

We now turn to the pancyclicity lemma itself.

\begin{lemma}\label{lem:median-pancyclic}
Let $q\ge3$, and let $G$ be a graph of order $2q-1$.  If there are $q$ vertices of $G$ with degree at least $q$, then $G$ contains a cycle $C_k$ for every integer $3\le k\le q+1$.
\end{lemma}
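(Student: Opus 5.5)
We would let $H$ be a set of $q$ vertices of degree at least $q$ in $G$, write $N=2q-1$, and note that $q\ge (N+1)/2$. So every vertex of $H$ has degree at least the median threshold $(N+1)/2$ of Lemma~\ref{lem:li-xu}. The plan is to first find a long cycle through all of $H$ and then shrink it. For the first step, take distinct $x,y\in H$ with $xy\in E(G)$ if possible. Lemma~\ref{lem:li-xu} then gives an $(x,y)$-path containing all of $H$, and closing it with $xy$ yields a cycle $C$ with $V(C)\supseteq H$, so $|C|\ge q$.

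If no two vertices of $H$ are adjacent, $H$ is independent of size $q$. Each $v\in H$ then has all its $\ge q$ neighbours among the other $q-1$ vertices, which is impossible. Hence $H$ spans an edge and $C$ exists.

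Next we pass to the graph $G'=G[V(C)]$, of order $L\ge q$, which is Hamiltonian. We want to apply Lemma~\ref{lem:bondy-cycle-edge} to $G'$, or to a suitable subgraph of order $q+1$ (or $q$). This requires two consecutive vertices $x,y$ on the Hamilton cycle with $d_{G'}(x)+d_{G'}(y)\ge L+1$.

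The natural order of work is as follows. First, reduce $C$ to a cycle of length exactly $q+1$ (or $q$) that still carries many high-degree vertices. Second, check the Bondy degree-sum condition. Third, conclude that this subgraph is pancyclic, which gives all $C_k$ with $3\le k\le q+1$. When $|C|\ge q+1$, we aim to choose $C$ of length exactly $q+1$ and containing $q$ vertices of $H$ (plus one outside vertex). For a consecutive pair $x,y\in H$ on it, we have $d_G(x)+d_G(y)\ge 2q$. The difficulty is that degrees must be counted inside $V(C)$, while the vertices outside $C$, numbering $q-2$, may absorb neighbours.

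The main obstacle is exactly this loss of degree when restricting to the cycle. Lemma~\ref{lem:weighted-cover} is presumably designed for it. Suppose, for contradiction, that some $C_k$ with $k\le q+1$ is missing. We would then show, via the standard Bondy crossing-chord argument along $C$, that each outside vertex $u$ has its neighbourhood on $C$ forming a clique-like pattern. We would also show that the "forbidden" positions force each vertex $v$ of $H$ to be covered by at least $d(v)+1$ of a family of cliques $D_1,\dots,D_p$ (neighbourhood sets of vertices, intersected with $H$), with $p\le q-1$. Lemma~\ref{lem:weighted-cover} applied to $G[H]$ then gives $p\ge q$, a contradiction.

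I would first try to define $D_u=N(u)\cap H$ for $u\notin H$, together with the singletons $\{v\}$ for $v\in H$. Then $v$ is covered by $1+|N(v)\setminus H|\ge 1+q-d_H(v)$ sets. Absence of short cycles should force each $D_u$ to be a clique and give $d_H(v)$ small enough to reach $d_H(v)+1$. Getting this counting to match exactly is the delicate point. The case $k=q+1$ versus smaller $k$ may need the shrinking step handled separately.
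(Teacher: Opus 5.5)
\textbf{Verdict: genuine gap.} Your first step is correct and matches the paper: an edge inside the high-degree set, then Lemma~\ref{lem:li-xu}, closes to a cycle through all high-degree vertices. After that, the two steps that actually carry the proof are either missing or set up the wrong way.

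\textbf{The Bondy condition is never secured.} You do not obtain Bondy's degree-sum condition inside $G[V(C)]$. Your plan to shrink $C$ to a cycle of length exactly $q+1$ through $q$ vertices of $H$ is unjustified; it is essentially the $k=q+1$ case you are trying to prove. It also goes the wrong way, since discarding vertices only loses more degree. The missing idea is to \emph{extend} instead.

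Let $W$ be the set of \emph{all} vertices of degree at least $q$; you need all of them, since Li--Xu's path picks them all up. Since $|C|\le 2q-1<2|W|$, some $u,v\in W$ are consecutive on $C$. Suppose $d_{G[V(C)]}(u)+d_{G[V(C)]}(v)\le |C|$, and set $Z=V(G)\setminus V(C)$. Then $d_Z(u)+d_Z(v)\ge 2q-|C|=|Z|+1$, so $u$ and $v$ have a common neighbour $z\in Z$. Replacing $uv$ by $uzv$ gives a longer cycle through $W$. Iterating yields a pancyclic induced subgraph $G[X]$ with $X\supseteq W$. This gives every $C_k$ with $3\le k\le q$, and also $C_{q+1}$ unless $X=W$ and $|W|=q$.

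\textbf{The weighted-cover step is set up backwards.} The absence of a cycle can only force $N(z)\cap W$ to be \emph{independent} in $G[W]$, since an edge inside it closes a cycle through $z$; it can never force a clique.

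Concretely, consider the remaining case $|W|=q$ with no $C_{q+1}$. For every edge $uv$ of $G[W]$, the Li--Xu $uv$-path through $W$ cannot leave $W$. If it did, it would close to a cycle of order at least $q+1$ through $W$, and the extension above would produce a $C_{q+1}$. So $G[W]$ has a Hamilton $uv$-path for every edge $uv$. Hence, for $z\notin W$, any edge inside $N(z)\cap W$ would give a $C_{q+1}$.

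Lemma~\ref{lem:weighted-cover} must therefore be applied to the complement $\overline{G[W]}$. There, $d_{V(G)\setminus W}(w)\ge q-d_{G[W]}(w)=d_{\overline{G[W]}}(w)+1$ matches the hypothesis exactly, using only the $p=q-1$ cliques $N(z)\cap W$. This yields the contradiction $q-1\ge q$.

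Your version in $G[H]$ would need $d_H(v)\le q/2$, which is not available. Moreover, adding the $q$ singletons raises the number of sets to $2q-1\ge q$, so no contradiction can come out of it.
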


\begin{proof}
We first prove the following assertion.  If $S\subseteq V(G)$ satisfies
\[
|S|\ge q
\qquad\text{and}\qquad
d_G(v)\ge q\quad\text{for every }v\in S,
\]
then $G$ has a pancyclic induced subgraph whose vertex set contains $S$.

Since $|V(G)\setminus S|\le q-1$, every vertex of $S$ has a neighbor in
$S$.  Choose an edge $xy$ inside $S$.  As
$
\frac{|V(G)|+1}{2}=q,
$
Lemma~\ref{lem:li-xu} gives an $xy$-path $P$ containing every vertex of
degree at least $q$.  In particular, $S\subseteq V(P)$.  Hence
$P\cup\{xy\}$ forms a cycle $Q_0$ containing $S$.

We now extend $Q_0$.  Suppose that a cycle $Q_i$ containing $S$ has been
constructed, and set
\[
\ell_i=|V(Q_i)|,\qquad
H_i=G[V(Q_i)],\qquad
Z_i=V(G)\setminus V(Q_i).
\]
Since
\[
\ell_i\le 2q-1<2|S|,
\]
there must exist two vertices of $S$ that are consecutive on $Q_i$. Otherwise, the
successors on $Q_i$ of the vertices in $S$ would be distinct vertices of
$V(Q_i)\setminus S$, which would imply
$
|V(Q_i)\setminus S|\ge |S|
$
and hence $\ell_i\ge2|S|$, a contradiction.

Let $u_iv_i$ be an edge of $Q_i$ with $u_i,v_i\in S$.
If
$
d_{H_i}(u_i)+d_{H_i}(v_i)\ge \ell_i+1,
$
then Lemma~\ref{lem:bondy-cycle-edge}, applied to the Hamilton cycle
$Q_i$ of $H_i$, implies that $H_i$ is pancyclic, and the assertion follows.

Otherwise, we have
$
d_{H_i}(u_i)+d_{H_i}(v_i)\le \ell_i.
$
Since $d_G(u_i),d_G(v_i)\ge q$, we have
\[
\begin{aligned}
d_{Z_i}(u_i)+d_{Z_i}(v_i)
=d_G(u_i)+d_G(v_i)
  -d_{H_i}(u_i)-d_{H_i}(v_i)
\ge 2q-\ell_i
=|Z_i|+1.
\end{aligned}
\]
Therefore we have
$
N_G(u_i)\cap N_G(v_i)\cap Z_i\ne\varnothing.
$
Choose a common neighbor $z_i\in Z_i$ of $u_i$ and $v_i$.  Replacing the
edge $u_iv_i$ of $Q_i$ with the path $u_iz_iv_i$ gives a cycle
$Q_{i+1}$ satisfying
\[
S\subseteq V(Q_{i+1})
\qquad\text{and}\qquad
|V(Q_{i+1})|=\ell_i+1.
\]

Thus, whenever the degree-sum condition fails, the order of the current
cycle increases by one.  Since every cycle of $G$ has order at most
$2q-1$, only finitely many extensions are possible.

It remains to consider the case that the current cycle $Q_i$ is spanning.
Then
\[
H_i=G
\qquad\text{and}\qquad
\ell_i=|V(Q_i)|=2q-1.
\]
As above, $Q_i$ contains an edge $u_iv_i$ with $u_i,v_i\in S$.  Since
each vertex of $S$ has degree at least $q$ in $G$, we have
\[
\begin{aligned}
d_{H_i}(u_i)+d_{H_i}(v_i)
=d_G(u_i)+d_G(v_i)
\ge 2q
=\ell_i+1.
\end{aligned}
\]
Thus the degree-sum condition holds when the current cycle is spanning.
Since \(H_i = G\), Lemma~\ref{lem:bondy-cycle-edge} implies that \(G\) is pancyclic.
Hence the construction terminates with a pancyclic induced
subgraph containing \(S\).  This proves the assertion.

Now apply the assertion to the set
\[
W=\{v\in V(G):d_G(v)\ge q\}.
\]
By hypothesis, \(|W|\ge q\). The assertion yields a pancyclic induced subgraph \(G[X]\) containing \(W\). Since \(|X|\ge |W|\ge q\), it follows that \(G\) contains \(C_k\) for every \(3\le k\le q\).

It remains to prove that \(G\) contains \(C_{q+1}\). Suppose, to the contrary, that \(G\) is \(C_{q+1}\)-free. Since \(G[X]\) is pancyclic and contains no \(C_{q+1}\), we must have \(|X|\le q\). But \(|X|\ge |W|\ge q\), so \(|X|=q\) and \(X=W\). In particular, \(|W|=q\). Let \(H=G[W]\). Then \(H\) is pancyclic and hence Hamiltonian.

By Lemma~\ref{lem:li-xu}, for every edge \(uv\in E(H)\), there exists a \(uv\)-path \(P\) in \(G\) that contains every vertex of \(W\). Suppose \(P\) contains a vertex outside \(W\). Then \(P\cup\{uv\}\) is a cycle containing \(W\) of order at least \(q+1\). Applying the extension procedure from the assertion to this cycle yields a pancyclic induced subgraph of order at least \(q+1\), hence a \(C_{q+1}\), contradicting the assumption that \(G\) is \(C_{q+1}\)-free. Therefore \(V(P)=W\), so \(P\) is a Hamilton \(uv\)-path in \(H\).

Let
$U=V(G)\setminus W$.
For each $z\in U$, define $N_W(z)=N_G(z)\cap W$. We claim that each $N_W(z)$ is an independent set in $H$. Otherwise, $N_W(z)$ contains an edge $uv\in E(H)$. Let $P$ be a Hamilton $uv$-path in $H$. Then $P\cup\{zu,zv\}$ is a cycle of order $q+1$, contradicting the assumption that $G$ is $C_{q+1}$-free. Hence each $N_W(z)$ is independent in $H$ and therefore a clique in $\overline H$.

For every $w\in W$, note that $d_H(w)+d_{\overline H}(w)=q-1$, so we have
\[
d_U(w)
=d_G(w)-d_H(w)
\ge q-d_H(w)
=d_{\overline H}(w)+1.
\]
Since the family $\{N_W(z):z\in U\}$ consists of $|U|=q-1$ cliques of $\overline H$, and every vertex of $\overline H$ is covered by at least $d_{\overline H}(w)+1$ of them, Lemma~\ref{lem:weighted-cover} gives $q-1\ge q$, a contradiction. Hence \(G\) contains \(C_{q+1}\), completing the proof.
\end{proof}

\section{The long-cycle range}\label{sec:long-cycle}

\subsection{A maximal ear-forest structure}

Let \(G\) be a graph, \(W\subseteq V(G)\), and \(U=V(G)\setminus W\). A \emph{\(W\)-ear in \(G\)} is a path whose endpoints lie in \(W\) and whose internal vertices all lie in \(U\); it is \emph{nontrivial} if it has at least one internal vertex.

The following structural lemma, one of the main new ingredients of
the proof, describes a maximal family of ears when
a specified clique is not contained in any sufficiently long cycle.
It also controls how the remaining vertices attach to the resulting
linear forest.

\begin{lemma}\label{lem:max-ear-forest}
Let \(G\) be a graph, let \(W\subseteq V(G)\) induce a clique with \(|W|\ge3\), and set \(U=V(G)\setminus W\).
Let \(L_0\) be an integer satisfying \(|W|+2\le L_0\le |V(G)|\).
Suppose that \(G\) contains no cycle through \(W\) of length at least \(L_0\).
Among all families of pairwise internally vertex-disjoint nontrivial
\(W\)-ears whose endpoint pairs are distinct and form a linear forest
on \(W\), choose \(\mathcal P\) to maximize the total number of internal
vertices. Let \(F\) be the linear forest whose edges are these endpoint
pairs.

Define
\[
\rho=\sum_{P\in\mathcal P}|V(P)\cap U|,\qquad
U_{\mathrm{free}}=U\setminus\bigcup_{P\in\mathcal P}V(P),
\]
and let \(W_{\mathrm{int}}\) denote the set of vertices of degree \(2\) in \(F\).
Then the following hold:

\begin{enumerate}
\item[(i)] \(\displaystyle |U_{\mathrm{free}}|\ge |V(G)|-L_0+1.\)

\item[(ii)] For each \(u\in U_{\mathrm{free}}\), the set \(N_G(u)\cap(W\setminus W_{\mathrm{int}})\) is contained in a single path component of \(F\) and has size at most \(2\); if it has size \(2\), it is precisely the set of endpoints of that component.
\end{enumerate}
\end{lemma}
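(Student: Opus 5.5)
For (i), we build a long cycle through \(W\) from \(\mathcal P\) and \(F\). Since \(W\) is a clique and \(F\) is a linear forest on \(W\), we can order \(W\) as \(w_1,\dots,w_t\) (\(t=|W|\ge3\)) so that every edge of \(F\) joins consecutive vertices \(w_iw_{i+1}\) (indices mod \(t\)). To do this, list the path components of \(F\) one after another, followed by the isolated vertices. The closing edge \(w_tw_1\) is then not an \(F\)-edge, except when \(F\) is a single Hamilton path of \(W\), and in that case \(w_tw_1\) is still an edge of the clique. Now take the Hamilton cycle \(w_1w_2\cdots w_tw_1\) of \(G[W]\) and replace each \(F\)-edge \(w_iw_{i+1}\) by its ear in \(\mathcal P\). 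The ears are internally disjoint, so this gives a cycle through \(W\) of length \(|W|+\rho\). The hypothesis then yields \(|W|+\rho\le L_0-1\), so
\[
|U_{\mathrm{free}}|=|U|-\rho=|V(G)|-|W|-\rho\ge |V(G)|-L_0+1.
\]
The only point needing care is that the closing edge must not conflict with an ear. This matters only when \(F\) is a Hamilton cycle on \(W\), which cannot happen because \(F\) is a forest.

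For (ii), we argue by contradiction with the maximality of \(\mathcal P\). Fix \(u\in U_{\mathrm{free}}\) and two neighbours \(a,b\in W\setminus W_{\mathrm{int}}\) of \(u\). Each of \(a,b\) has \(F\)-degree at most \(1\), so it is an isolated vertex of \(F\) or an endpoint of a path component. Suppose \(a\) and \(b\) lie in different components of \(F\), counting isolated vertices as trivial components. Then \(ab\notin E(F)\), and adding the edge \(ab\) to \(F\) keeps it a linear forest: it joins two path endpoints from distinct components and creates no vertex of degree \(3\). So \(\mathcal P\cup\{aub\}\) is an admissible family with \(\rho+1\) internal vertices, contradicting maximality. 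Hence all of \(N_G(u)\cap(W\setminus W_{\mathrm{int}})\) lies in a single component. That component has at most two vertices of \(F\)-degree \(\le1\), namely its endpoints (or one vertex if it is isolated), which gives the size bound \(2\) and the description when the size is exactly \(2\).

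One degenerate case needs checking: \(a,b\) are the two ends of a component consisting of a single \(F\)-edge \(ab\), whose ear is \(P\). Adding a second ear with the same endpoint pair is forbidden, but that is consistent with the statement, since \(\{a,b\}\) is exactly the endpoint set of the component. So nothing more needs to be ruled out.

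I expect (i) to be the main obstacle, specifically the bookkeeping that the concatenated closed walk is a genuine cycle. One must verify that the consecutive ordering of \(W\) realises every \(F\)-edge and that the clique edges used between components are never ear pairs. Part (ii) is a short maximality argument. The assumption \(L_0\ge|W|+2\) is not needed in either step; it only ensures that the statement is non-vacuous.
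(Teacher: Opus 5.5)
Your proof is correct and follows the paper's argument essentially step for step. For (i), you close $F$ into a Hamilton cycle of the clique $G[W]$ and substitute the ears, giving a cycle through $W$ of length $|W|+\rho$. For (ii), you apply maximality to the new ear $aub$ joining two different components; your remark that $L_0\ge |W|+2$ is never used also matches the paper, and the ``except when $F$ is a single Hamilton path'' clause is unnecessary, since for $|W|\ge3$ the pair $w_tw_1$ is never an $F$-edge.
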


\begin{proof}
Choose \(\mathcal P\) and \(F\) as in the statement. Such a family
exists since the empty collection is admissible. For each edge
\(e=xy\in E(F)\), let \(P_e\in\mathcal P\) be the unique ear with
endpoints \(x\) and \(y\).

\medskip
(i) Suppose \(\rho\ge L_0-|W|\). Since \(W\) is a clique, we can order the components of \(F\) arbitrarily and connect them by edges of \(G[W]\) to form a Hamilton cycle of \(G[W]\) that contains every edge of \(F\). Replacing each edge \(e=xy\in E(F)\) in this cycle by the
corresponding ear \(P_e\) yields a cycle containing every vertex of \(W\) of length
\[
|W|+\rho\ge L_0,
\]
which is a contradiction. Thus \(\rho\le L_0-|W|-1\).

 Since \(|U|=|V(G)|-|W|\), we have
\[
|U_{\mathrm{free}}|=|U|-\rho\ge |V(G)|-L_0+1.
\]
This proves (i).

\medskip

(ii) Fix \(u\in U_{\mathrm{free}}\) and set
\[
S_u=N_G(u)\cap(W\setminus W_{\mathrm{int}}).
\]
Then each vertex of \(S_u\) has degree at most \(1\) in \(F\), since \(W\setminus W_{\mathrm{int}}\) consists of the endpoints of nontrivial path components together with the isolated vertices.

We claim \(S_u\) lies in one component of \(F\). Otherwise, \(x,y\in S_u\) lie in distinct components. Since \(x,y\) each have degree at most \(1\), adding \(xy\) to \(F\) still yields a linear forest. The path \(xuy\) is a nontrivial \(W\)-ear internally disjoint from all ears in \(\mathcal P\) (as \(u\in U_{\mathrm{free}}\)), so \(\mathcal P\cup\{xuy\}\) has internal order \(\rho+1\), contradicting maximality. Thus \(S_u\) lies in one component of \(F\).

Since each component of \(F\) is a path and has at most two vertices of degree at most \(1\), we have \(|S_u|\le 2\), with equality only when the two vertices are the endpoints of that component. This proves (ii), and hence the lemma follows.
\end{proof}

We now prove the main result of this section. The following proposition provides the upper bound in the long-cycle range. Its proof uses the ear-forest structure established above to embed a large subtree of \(T_n\) in the red graph and then applies Hall's theorem to place the remaining leaves.

Let $N_{\mathrm{HP}}$ be an integer such that
Lemma~\ref{lem:dense-lks} holds with $\alpha=1/3$ for every
$N\ge N_{\mathrm{HP}}$.

\begin{proposition}\label{prop:long-cycle-range}
Let $n\ge5$ and suppose that $2n-1\ge N_{\mathrm{HP}}$. If $m$ is an odd integer satisfying
\[
            n+2\le m\le 2n-\left\lfloor\frac n2\right\rfloor,
\]
then every red--blue coloring of $K_{2n-1}$ contains either a red copy of $T_n$ or a blue $C_m$.
\end{proposition}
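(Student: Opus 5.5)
We argue by contradiction. Fix a red--blue coloring of $K_{2n-1}$ with no red $T_n$ and no blue $C_m$, and write $R$ and $B$ for the red and blue graphs on $N=2n-1$ vertices.

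\textbf{Step 1: many high blue degrees.} Apply Lemma~\ref{lem:dense-lks} with $\alpha=1/3$ and $k=n-1$. This is allowed since $N\ge N_{\mathrm{HP}}$ and $\frac13 N<n-1<N$. Because $R$ contains no $T_n$, fewer than $N/2$ vertices have red degree at least $n-1$. Hence at least $n$ vertices have red degree at most $n-2$, that is, blue degree at least $n$. Let $W$ be the set of vertices with blue degree at least $n$, so $|W|\ge n$.

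\textbf{Step 2: pass to a red clique $W'$.} Next we exploit the absence of a blue $C_m$ inside $W$. By Lemma~\ref{lem:median-pancyclic} with $q=n$, the blue graph contains all cycles of length up to $n+1$. More precisely, the proof of that lemma produces a blue pancyclic induced subgraph $G[X]$ with $W\subseteq X$. Since $m$ is odd and $n+2\le m$, we would like to show that the blue cycles through $W$ are all shorter than $m$. If $B[X]$ has order at least $m$, we already get a blue $C_m$. So $|X|\le m-1$.

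My plan is to reduce to the situation of Lemma~\ref{lem:max-ear-forest}, but the roles of red and blue must be handled carefully. The lemma needs $W$ to be a clique in the graph $G$ in which we forbid long cycles. I would therefore take $G=B$ and try to find a large subset $W'\subseteq W$ inducing a blue clique. This seems unrealistic. The alternative is the other way around: blue forbids long cycles, so inside $W$ the blue graph is far from Hamiltonian-connected, which forces large red cliques. Concretely, I would first show that $R[W]$ is nearly complete. If two vertices of $W$ were joined by many disjoint blue paths, then the extension procedure of Lemma~\ref{lem:median-pancyclic} would give a blue cycle of every length up to roughly $|X|$. Combined with Lemma~\ref{lem:bondy-cycle-edge}, this would reach length $m$, using $m\le 2n-\lfloor n/2\rfloor$. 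From this I would extract a red clique $W'\subseteq W$ of size close to $n$.

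\textbf{Step 3: apply the ear-forest lemma in red.} With $W'$ a red clique, apply Lemma~\ref{lem:max-ear-forest} to $G=R$, with the red ears playing the role of $W'$-ears. If $R$ has a cycle through $W'$ of length at least $n$, then because $W'$ is a clique and the ears are flexible, I would try to embed $T_n$ directly. The tree-embedding step places a large subtree of $T_n$ along the linear forest $F$ and the clique. Otherwise, take $L_0=n$. Then $|U_{\mathrm{free}}|\ge n$. By part (ii), each free vertex has at most two red neighbours among the non-interior vertices of $W'$. So free vertices are blue-adjacent to almost all of $W'\setminus W_{\mathrm{int}}$, which yields a nearly complete blue bipartite graph between $U_{\mathrm{free}}$ and $W'\setminus W_{\mathrm{int}}$. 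Both sides have size at least about $n/2$. A blue odd cycle of length $m$ then needs one extra odd chord. I would obtain it from a blue edge inside $U_{\mathrm{free}}$, or else $U_{\mathrm{free}}$ is a red clique of order at least $n$, which contains $T_n$.

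\textbf{Step 4: embed the tree.} In the remaining case, embed the non-leaf part of $T_n$ into the red clique together with the ears. This uses the tree-embedding lemma to put the vertices of high degree in $W'$. Then match the leaves to available red neighbours by Hall's theorem, checking Hall's condition with the degree bounds on $W$.

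The main obstacle is Step 2, producing a large red clique, together with the parity and length bookkeeping in Step 3 needed to hit exactly $m$. The bound $m\le 2n-\lfloor n/2\rfloor$ must be used precisely there, to guarantee that the blue bipartite structure is large enough to carry a cycle of length $m$.
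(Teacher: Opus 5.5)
There is a genuine gap, and it is in Step 2, where you flagged it. The problem is not a missing detail; the direction is reversed. You call making $W$ a blue clique ``unrealistic,'' but you do not need to \emph{find} a blue clique in $W$: you can \emph{create} one for free. Any two vertices of $W$ that are nonadjacent in blue have blue degree sum at least $2n>2n-1$. So by Lemma~\ref{lem:marked-closure} (the \v{C}ada--Flandrin--Li--Ryj\'a\v{c}ek closure), you may add all red edges inside $W$ to the blue graph one at a time without creating a blue cycle of length at least $m$ through $W$. In the original colouring such a cycle would already give an exact blue $C_m$ by Lemma~\ref{lem:exact-adjustment}. The red graph only loses edges, so a red $T_n$ found afterwards is red originally. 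Thus the obstruction that drives the contradiction survives making $W$ entirely blue. You give no argument that $\R[W]$ is nearly complete, and the heuristic about many disjoint blue paths does not supply one. Steps 3--4, which are built on the red clique $W'$, therefore have nothing to stand on.

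Step 3 also fails on its own terms. Lemma~\ref{lem:max-ear-forest} needs the host graph to have no long cycle through the clique, and nothing gives you that in red. Your fallback (``if $\R$ has a cycle through $W'$ of length at least $n$, embed $T_n$ directly'') is unsupported. If $|W'|\le n-3$, the clique together with a cycle through it has maximum degree at most $n-2$, so it misses $K_{1,n-1}$, and you name no other source of red edges.

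The workable route applies the lemma in blue, to the closed graph, with $L_0=m$. Part (i) gives $|U_{\mathrm{free}}|\ge 2n-m\ge\lfloor n/2\rfloor\ge a$, where $a$ is the size of the smaller colour class $X$ of $T_n$. This is exactly where $m\le 2n-\lfloor n/2\rfloor$ is used, not in building a blue $C_m$ from a bipartite structure. Part (ii) puts the blue neighbourhood of each free vertex within $W\setminus W_{\mathrm{int}}$ inside a single class of the partition $\mathcal Q$, whose classes are the endpoint pairs of nontrivial components of $F$ and singletons for isolated vertices. Moreover $|\mathcal Q|=n-p\ge|U_{\mathrm{free}}|+1\ge a+1$. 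Lemma~\ref{lem:partition-defect-tree} then embeds $T_n-Y_1$ in red, with $X$ on free vertices \emph{outside} $W$ and the non-leaf part $Y_0$ of the larger class in $W\setminus W_{\mathrm{int}}$. The leaves $Y_1$ are placed by Hall's theorem into the red neighbourhoods $\Lambda(u)$. These include the ear vertices $v_{e,x}$, which are red by maximality of $\mathcal P$ (Claim~\ref{clm:boundary-return}), and satisfy $\bigl|\bigcup_{u\in Z}\Lambda(u)\bigr|\ge n-2$ (Claim~\ref{clm:return-union}). Stars are handled separately by Corollary~\ref{lem:bondy}.
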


We need several auxiliary lemmas for the proof of Proposition~\ref{prop:long-cycle-range}. The first asserts that if two nonadjacent vertices have degree sum at least the order of the graph, then adding the edge between them preserves the existence of a cycle containing at least $k$ prescribed vertices.
\begin{lemma}[\v{C}ada, Flandrin, Li and Ryj\'a\v{c}ek~\cite{CadaEtAl2004}]
\label{lem:cada-closure}
If $G$ is a graph of order $N$ and $u,v$ are nonadjacent vertices with
$d_G(u)+d_G(v)\ge N$, then, for $G'=G+uv$, the following equivalence holds
for every $A\subseteq V(G)$ and every integer $1\le k\le |A|$:
$G$ has a cycle $C$ with $|V(C)\cap A|\ge k$ if and only if $G'$ does.
\end{lemma}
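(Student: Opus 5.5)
The plan is to prove the two implications separately. The implication from $G$ to $G'$ is trivial, since $G\subseteq G'$ and any cycle of $G$ is a cycle of $G'$. For the converse, I take a cycle $C$ of $G'$ with $|V(C)\cap A|\ge k$. If $uv\notin E(C)$, then $C$ is already a cycle of $G$ and there is nothing to do. So assume $uv\in E(C)$. Then $C-uv$ is a $uv$-path $P=x_1x_2\cdots x_r$ in $G$ with $x_1=u$, $x_r=v$ and $r=|V(C)|\ge 3$. The goal is to find a cycle of $G$ whose vertex set contains $V(P)=V(C)$. Such a cycle automatically meets $A$ in at least $k$ vertices, and this containment is all that is needed.

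\textbf{Crossing case.} This is the Bondy--Chv\'atal crossing argument. Let
\[
S=\{i\in\{1,\dots,r-1\}: ux_{i+1}\in E(G)\},\qquad T=\{i\in\{1,\dots,r-1\}: vx_i\in E(G)\}.
\]
Because $uv\notin E(G)$, every neighbour of $u$ on $P$ lies in $\{x_2,\dots,x_{r-1}\}$, so $|S|=d_P(u)$, and likewise $|T|=d_P(v)$. If some $i\in S\cap T$, then
\[
x_1x_2\cdots x_i\,x_r x_{r-1}\cdots x_{i+1}\,x_1
\]
is a cycle of $G$ on exactly $V(P)$, and we are done.

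\textbf{Non-crossing case.} Here $S\cap T=\varnothing$ and both sets lie in a set of size $r-1$, so $d_P(u)+d_P(v)\le r-1$. Let $Z=V(G)\setminus V(P)$, which has $|Z|=N-r$. Then
\[
d_Z(u)+d_Z(v)\ge N-(r-1)=|Z|+1,
\]
so $u$ and $v$ have a common neighbour $w\in Z$. The cycle $x_1x_2\cdots x_r\,w\,x_1$ lies in $G$ and contains $V(P)$, which completes the argument.

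The only step needing care is the counting in the crossing argument, in particular the use of $uv\notin E(G)$ to get $|S|=d_P(u)$ and $|T|=d_P(v)$ exactly. The rest is routine. It is worth noting that the statement is monotone in the covered set: a cycle containing $V(C)$ covers at least as many vertices of $A$. This is why the one-vertex extension through $w$ is enough, and no stronger length-preservation property is required.
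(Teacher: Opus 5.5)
Your proof is correct and complete. The paper gives no proof of this statement. It imports it as a black box from \v{C}ada, Flandrin, Li and Ryj\'a\v{c}ek, and then specializes it with $A=V(C)$ and $k=|V(C)|$ to obtain Lemma~\ref{lem:marked-closure}, which is the only form it actually uses.

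Your route is the classical Bondy--Chv\'atal closure argument, and it proves the stronger containment statement directly. For every cycle $C$ of $G+uv$ there is a cycle $C'$ of $G$ with $V(C)\subseteq V(C')$ and $|V(C')|\le|V(C)|+1$. In the crossing case $V(C')=V(C)$; otherwise $C'$ picks up exactly one extra vertex $w$. This gives Lemma~\ref{lem:cada-closure} and Lemma~\ref{lem:marked-closure} at once, so the detour through the $(A,k)$ formulation is unnecessary and the section becomes self-contained. It is also the same ``close up along a crossing pair, or insert a common neighbour from $Z$'' mechanism the paper itself uses in Lemmas~\ref{lem:median-pancyclic} and~\ref{lem:exact-adjustment}. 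The citation only saves space; your argument shows nothing deeper is needed here.

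One small correction to your own commentary concerns where nonadjacency is used. It is not what makes $|S|=d_P(u)$ and $|T|=d_P(v)$ hold; those identities hold regardless, since $x_r$ would simply be counted at $i=r-1$ and $x_1$ at $i=1$. Nonadjacency is genuinely used to guarantee that any $i\in S\cap T$ satisfies $2\le i\le r-2$. That is what makes $x_1\cdots x_i x_r\cdots x_{i+1}x_1$ a genuine cycle on $V(P)$ avoiding the edge $uv$.
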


We shall use the following direct consequence, which preserves both a
prescribed vertex set and a lower bound on the cycle order.

\begin{lemma}
\label{lem:marked-closure}
If $G$ is a graph of order $N$ and $u,v$ are nonadjacent vertices with
$d_G(u)+d_G(v)\ge N$, then, for $G'=G+uv$, the following holds
for every $S\subseteq V(G)$ and every positive integer $\ell$:
if $G'$ has a cycle of length at least $\ell$ containing every vertex of $S$, then $G$ has such a cycle.
\end{lemma}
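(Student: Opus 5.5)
We derive Lemma~\ref{lem:marked-closure} from Lemma~\ref{lem:cada-closure} by choosing the marked set $A$ cleverly. The property ``a cycle of length at least $\ell$ through all of $S$'' is not literally of the form ``a cycle meeting $A$ in at least $k$ vertices''. So we encode it in an auxiliary graph, or, more simply, we apply Lemma~\ref{lem:cada-closure} twice with two different choices of $A$ and then combine them. The cleanest route is to pass to an auxiliary graph: blow up the vertices so that ``contains $S$ and has length at least $\ell$'' becomes a single threshold condition on weights.

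First, the trivial cases. If $S=\varnothing$ and we only need length $\ge\ell$, take $A=V(G)$ and $k=\ell$; a cycle has length $\ge \ell$ iff $|V(C)\cap V(G)|\ge\ell$. The direction ``$G'$ has one $\Rightarrow$ $G$ has one'' is exactly one direction of the equivalence in Lemma~\ref{lem:cada-closure}, provided $\ell\le N$. If $\ell>N$, no cycle exists in $G'$ and the claim is vacuous. Similarly, if $\ell\le |S|$, we take $A=S$ and $k=|S|$.

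For the general case, I would construct an auxiliary graph $G^*$ (and $G'^*=G^*+uv$) by attaching to each vertex $s\in S$ a large number $M$ of ``twin'' copies. Each copy is a new vertex whose neighbourhood equals $N_G(s)\cup\{s\}$, so the copies together with $s$ form a clique; this keeps degree sums balanced. A cleaner variant that avoids changing degrees is to avoid new vertices altogether and instead use the \emph{proof} of Lemma~\ref{lem:cada-closure}. That proof is a Bondy--Chvátal style rerouting: given a cycle $C'$ in $G'$ using $uv$, the degree-sum condition produces a cycle $C$ in $G$ with $V(C)\supseteq V(C')$ or $V(C)=V(C')$ up to losing no $A$-vertex. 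If the standard argument gives $V(C)\supseteq V(C')$, which is the case when the cycle through $uv$ is Hamiltonian in a suitable sense, the conclusion for $S$ and $\ell$ is immediate. Given that the paper presents this lemma as a ``direct consequence'', I would aim for this: show that whenever $G'$ has a cycle $C'$ with $|C'|\ge\ell$ and $S\subseteq V(C')$, applying Lemma~\ref{lem:cada-closure} with $A=S\cup X$ yields what we want. Here $X$ is a suitable set of $\ell-|S|$ further vertices of $C'$ and $k=|A|=\ell$, so $A\subseteq V(C')$, hence $|V(C')\cap A|= |A|$. The lemma then gives a cycle $C$ in $G$ with $|V(C)\cap A|\ge |A|$, i.e.\ $A\subseteq V(C)$. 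Then $S\subseteq V(C)$ and $|V(C)|\ge|A|=\ell$. (If $|S|\ge\ell$, take $A=S$.)

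This choice of $A=S\cup X$ with $k=|A|$ is in fact the whole proof, since $X\subseteq V(C')\setminus S$ of size $\max(0,\ell-|S|)$ exists because $|V(C')|\ge\ell$. The only point to check is $1\le k\le|A|$, which holds for $\ell\ge1$. I expect no real obstacle. The one thing to be careful about is using only the reverse implication of the equivalence, and that $A$ is chosen depending on $C'$, which is allowed since Lemma~\ref{lem:cada-closure} holds for every $A$.
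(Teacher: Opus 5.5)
Your final argument is correct and is essentially the paper's proof: apply Lemma~\ref{lem:cada-closure} with a set $A\subseteq V(C')$ that contains $S$ and has size at least $\ell$, together with $k=|A|$, so that the resulting cycle $C$ in $G$ satisfies $A\subseteq V(C)$. The paper simply takes $A=V(C')$, which avoids choosing $X$ or splitting on $|S|$ versus $\ell$ and yields a cycle in $G$ containing all of $V(C')$. The blow-up and ``reprove the closure lemma'' detours in your opening paragraphs are unnecessary and can be dropped.
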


\begin{proof}
Let \(C\) be a cycle in \(G'\) of length at least \(\ell\) through \(S\).
Applying Lemma~\ref{lem:cada-closure} with \(A=V(C)\) and
\(k=|V(C)|\), we obtain a cycle \(C'\) in \(G\) with
\(|V(C')\cap V(C)|\ge |V(C)|\).
Thus \(C'\) contains all of \(V(C)\), and hence all of \(S\), and has
length at least \(|V(C)|\ge \ell\).
\end{proof}

Since Lemma~\ref{lem:marked-closure} preserves only a lower bound on the
cycle length, we also need to extract a cycle of the prescribed length
from a longer one. The next lemma shows that this is possible when the
longer cycle passes through an appropriate set of high-degree vertices.

\begin{lemma}\label{lem:exact-adjustment}
If $G$ is a graph of order $2n-1$ and $W\subseteq V(G)$ is an $n$-set such that
$d_G(w)\ge n$ for every $w\in W$, then, for every integer $m$ with
$3\le m\le2n-1$, the following holds:
whenever $G$ contains a cycle of length at least $m$ through $W$,
$G$ contains a cycle of length exactly $m$.
\end{lemma}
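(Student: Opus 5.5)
Let $C$ be a cycle through $W$ of length $\ell\ge m$, chosen with $\ell$ minimal among all cycles of length at least $m$ through $W$. If $\ell=m$ we are done. Otherwise we aim to produce a cycle of length exactly $m$, or a contradiction with minimality. We follow the extension/shortening scheme of Lemma~\ref{lem:median-pancyclic}. The basic tool is Lemma~\ref{lem:bondy-cycle-edge} applied to $H=G[V(C)]$, for which $C$ is a Hamilton cycle of order $\ell$. If two consecutive vertices $x,y$ of $C$ satisfy $d_H(x)+d_H(y)\ge \ell+1$, then $H$ is pancyclic. Since $3\le m\le \ell$, $H$ then contains $C_m$ and we are done.

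First we check that such a consecutive pair exists in $W$. Since $|W|=n$ and $\ell\le 2n-1<2|W|$, the successor argument from the proof of Lemma~\ref{lem:median-pancyclic} gives two vertices $u,v\in W$ that are consecutive on $C$. Suppose the degree-sum condition fails for every such consecutive pair in $W$. For each such pair we have $d_G(u)+d_G(v)\ge 2n$ and $d_H(u)+d_H(v)\le \ell$. Setting $Z=V(G)\setminus V(C)$, this gives $d_Z(u)+d_Z(v)\ge 2n-\ell=|Z|+1$. Hence $u$ and $v$ have a common neighbour $z\in Z$. This is the situation in which the earlier proof extended the cycle, but here we need to \emph{shorten}, so the argument must be redone with $\ell$ decreasing.

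The key step is to use the high degrees inside $H$ to find a chord-based shortening. Let $\ell>m$. I would count, for a consecutive pair $u,v\in W$ with successor $v$ of $u$, the neighbours of $u$ and $v$ on $C$. The standard Bondy-type crossing argument works as follows: if $u v^{+}$-type chords (namely $u\sim x^{+}$ and $v\sim x$ for a suitable $x$) exist at the correct distance, then one obtains a cycle through all of $W$ whose length is in $[m,\ell-1]$. It must contain all of $W$, because minimality was taken over cycles through $W$. To keep $W$ on the shortened cycle, I would instead perform the replacement using vertices of $Z$. Since $u$ and $v$ have many neighbours in $Z$, and every vertex of $W$ has degree at least $n>|Z|$ when $\ell\ge n$, we get flexibility to trade a segment of $C$ containing only vertices outside $W$ for a shorter detour. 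Concretely, pick a maximal segment of $C$ consisting of vertices of $V(C)\setminus W$. Its ends are adjacent on $C$ to vertices $a,b\in W$. Since $|V(C)\setminus W|=\ell-n$ and the excess is $\ell-m$, removing part of such segments via a chord or a common neighbour of $a,b$ reduces the length by a controlled amount while keeping $W$ on the cycle.

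The main obstacle is to guarantee a reduction by a prescribed small amount, ideally by exactly one, so that we do not jump below $m$. My first attempt would be to show directly that if $\ell>m$ then some vertex of $V(C)\setminus W$ can be bypassed. For this, take $y\notin W$ on $C$ whose predecessor $x$ and successor $x'$ on $C$ satisfy $xx'\in E(G)$, or find a common neighbour of $x$ and $x'$ that replaces a two-vertex segment. We argue this by contradiction. If no bypass is possible, then the neighbourhoods in $H$ of the vertices of $W$ avoid the successors of each other's neighbours. Combined with $d_G(w)\ge n$ and $|Z|=2n-1-\ell$, this forces each $w\in W$ to have many neighbours in $Z$. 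A pigeonhole count over $Z$, similar to the weighted-cover argument of Lemma~\ref{lem:weighted-cover}, then yields a common-neighbour extension that contradicts the minimality structure or directly forces the Bondy degree condition. If exact decrement fails, the fallback is to prove that $H$ has the Bondy condition whenever $\ell>m$ and $\ell$ is minimal, which again gives $C_m$.
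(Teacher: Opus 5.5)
\textbf{There is a genuine gap.} You chose the wrong extremal cycle, and as a result the shortening half of your argument is never carried out and cannot be.

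Minimality of $\ell$ gives you no leverage. When Bondy's condition fails at a consecutive pair $u,v\in W$, all you derive is a common neighbour $z\in Z$ of $u$ and $v$. That yields a \emph{longer} cycle through $W$, which does not contradict minimality.

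Your bypass step and your fallback (``$H$ satisfies Bondy's condition whenever $\ell>m$ is minimal'') both fail in general. Take $m=3$ and $n\ge4$. Let $G[W]$ be a cycle $C_n$, let $Z$ be $n-1$ further independent vertices, and join every vertex of $W$ to every vertex of $Z$. Then:
\begin{itemize}
\item $G$ has order $2n-1$ and $d_G(w)=n+1\ge n$ for every $w\in W$.
\item Every cycle through $W$ has length at least $n>m$, and the unique minimal one is $C=G[W]$.
\item There is no vertex of $V(C)\setminus W$ to bypass, and $H=G[W]=C_n$ has $d_H(u)+d_H(v)=4<n+1$, so it is not pancyclic.
\end{itemize}
The triangle that $G$ does contain uses exactly the vertex $z$ you set aside. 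More generally, for $m<n$ you can never shorten a cycle while keeping all of $W$ on it.

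\textbf{The fix is to reverse the extremal choice}, which is what the paper does. Take a cycle $Q$ through $W$ of \emph{maximum} order $\ell$. Since some cycle through $W$ has length at least $m$, we have $\ell\ge m$. Your first two paragraphs then finish the proof essentially verbatim:
\begin{itemize}
\item The successor argument gives consecutive $u,v\in W$ on $Q$.
\item If $d_H(u)+d_H(v)\le\ell$, the common neighbour $z\in Z$ gives the cycle obtained from $Q$ by replacing $uv$ with $uzv$. It passes through $W$ and has order $\ell+1$, contradicting maximality. When $Z=\varnothing$, the inequality $d_Z(u)+d_Z(v)\ge|Z|+1$ is already impossible.
\item Hence Bondy's condition holds at $uv$, so $H$ is pancyclic, and $C_m\subseteq H$ because $3\le m\le\ell$.
\end{itemize}
You never need to shorten: pancyclicity of $H$ supplies every length from $3$ up to $\ell$, so making the cycle longer only helps.
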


\begin{proof}
By hypothesis, $G$ contains a cycle through $W$. Choose such a cycle $Q$ of maximum order, and set
\[
\ell=|V(Q)|,\qquad
H=G[V(Q)],\qquad
Z=V(G)\setminus V(Q).
\]

Since $\ell\le 2n-1<2|W|$, the cycle $Q$ must contain two consecutive vertices of $W$. Otherwise, the successors on $Q$ of the vertices in $W$ would be distinct vertices outside $W$, implying $\ell\ge 2|W|=2n$, a contradiction. Let $uv$ be such an edge with $u,v\in W$.

Since $H=G[V(Q)]$, the cycle $Q$ is a Hamilton cycle of $H$. We claim that
\[
d_H(u)+d_H(v)\ge\ell+1.
\]
Suppose, to the contrary, that
$
d_H(u)+d_H(v)\le\ell.
$
Because $u,v\in W$, the degree hypothesis gives
$
d_G(u)+d_G(v)\ge2n.
$
Moreover,
\[
d_G(x)=d_H(x)+d_Z(x)
\qquad\text{for }x\in V(Q).
\]
It follows that
\[
\begin{aligned}
d_Z(u)+d_Z(v)
=d_G(u)+d_G(v)-d_H(u)-d_H(v)
\ge2n-\ell.
\end{aligned}
\]
Since
$
|Z|
=|V(G)|-|V(Q)|
=2n-1-\ell,
$
we obtain
$
d_Z(u)+d_Z(v)\ge |Z|+1.
$
Thus
\[
\bigl(N_G(u)\cap Z\bigr)
\cap
\bigl(N_G(v)\cap Z\bigr)
\ne\varnothing.
\]
Choose a common neighbor
$
z\in N_G(u)\cap N_G(v)\cap Z.
$

Replacing the edge $uv$ of $Q$ with the path $uzv$ gives a cycle
$Q'$ such that
\[
V(Q')=V(Q)\cup\{z\}.
\]
In particular, $Q'$ contains every vertex of $W$ and
$
|V(Q')|=\ell+1.
$
This contradicts the choice of $Q$ as a cycle of maximum order containing $W$. Hence
$
d_H(u)+d_H(v)\ge\ell+1.
$

The vertices $u$ and $v$ are consecutive on the Hamilton cycle $Q$ of
$H$.  Therefore Lemma~\ref{lem:bondy-cycle-edge} implies that $H$ is
pancyclic.  Since
$
3\le m\le\ell=|V(H)|,
$
the graph $H$, and hence $G$, contains $C_m$. This completes the proof.
\end{proof}

The next lemma provides a new tree-embedding tool: it embeds
a tree when the nonneighbors of each vertex on one side are
contained in a single class of a sufficiently fine partition of the
other side.

\begin{lemma}\label{lem:partition-defect-tree}
Let $H$ be a bipartite graph with parts $A$ and $B$, where $|A|=a$, and let
$\mathcal Q$ be a partition of $B$ into at least $a+1$ nonempty sets.
Suppose that, for every $u\in A$, there exists a member
$Q(u)\in\mathcal Q$ such that
\begin{equation}\label{eq:cover-condition}
B\subseteq N_H(u)\cup Q(u).
\end{equation}
If $T$ is a tree with bipartition $(X,Y)$ satisfying
$|X|=a>|Y|$,
then $H$ contains a copy of $T$ in which $X$ is embedded in $A$ and $Y$ is
embedded in $B$.
\end{lemma}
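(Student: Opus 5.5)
The plan is to fix the embedding of $X$ arbitrarily and then place $Y$ by Hall's theorem. Condition \eqref{eq:cover-condition} says that a vertex $u\in A$ sees all of $B$ except possibly the single class $Q(u)$. So once $X$ is placed in $A$, each $y\in Y$ can go to any vertex of $B$ outside the classes forbidden by the images of its neighbours. The acyclicity of $T$ should make these forbidden sets small enough for Hall's condition to hold.

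\emph{Step 1.} Take any bijection $\varphi:X\to A$; this is possible since $|X|=a=|A|$. For $y\in Y$ let $F_y=\{Q(\varphi(x)):x\in N_T(y)\}\subseteq\mathcal Q$, so $|F_y|\le d_T(y)$. Let $\mathrm{Av}(y)=B\setminus\bigcup F_y$. By \eqref{eq:cover-condition}, sending $y$ to any $b\in \mathrm{Av}(y)$ makes every edge $xy$ of $T$ an edge $\varphi(x)b$ of $H$. It therefore suffices to find an injection $\psi:Y\to B$ with $\psi(y)\in \mathrm{Av}(y)$, i.e.\ a system of distinct representatives for $\{\mathrm{Av}(y)\}_{y\in Y}$.

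\emph{Step 2 (Hall's condition).} Fix a nonempty $S\subseteq Y$ with $|S|=k\le |Y|\le a-1$, and let $d=\min_{y\in S}d_T(y)$, attained at $y_0$. Every class of $\mathcal Q$ outside $F_{y_0}$ lies in $\mathrm{Av}(y_0)$. Since the classes are nonempty and there are at least $a+1$ of them,
\[
\Bigl|\bigcup_{y\in S}\mathrm{Av}(y)\Bigr|\ \ge\ |\mathrm{Av}(y_0)|\ \ge\ (a+1)-d .
\]
To bound $d$, consider the subgraph of $T$ consisting of all edges between $S$ and $N_T(S)$. It is a forest on $k+|N_T(S)|\le k+a$ vertices, so
\[
kd\ \le\ \sum_{y\in S}d_T(y)\ \le\ k+a-1,
\]
which gives $d\le 1+(a-1)/k$. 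Hence the union has size at least $a-(a-1)/k$. This is at least $k$ exactly when $a(k-1)+1\ge k^2$. Because $a\ge k+1$, we get $a(k-1)+1\ge (k+1)(k-1)+1=k^2$, as needed.

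\emph{Step 3.} By Hall's theorem the required injection $\psi$ exists. Then $\varphi\cup\psi$ is an embedding of $T$ into $H$ with $X$ mapped into $A$ and $Y$ mapped into $B$.

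The main point, and the only place any care is needed, is Step 2. A single high-degree vertex $y$ can forbid many classes. The argument bounds the union of available sets below by the available set of the \emph{minimum}-degree vertex of $S$, and then uses the forest edge count to show this minimum degree is at most $1+(a-1)/k$. The hypotheses $|\mathcal Q|\ge a+1$ and $|Y|<a$ are exactly what make the final inequality go through.
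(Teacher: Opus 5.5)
Your proof is correct and follows essentially the same route as the paper: fix a bijection $X\to A$, then apply Hall's theorem to place $Y$, bounding the forbidden classes by the edge count of the forest between $S$ and $N_T(S)$, which yields the same inequality $a(k-1)+1\ge k^2$ (the paper's $(j-1)(a-j-1)\ge 0$). The only cosmetic differences are that the paper matches $Y$ to classes of $\mathcal Q$ rather than directly to vertices of $B$, and that it bounds the common forbidden set $\bigcap_{y}F_y$ rather than the forbidden set of a minimum-degree vertex.
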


\begin{proof}
Fix a bijection $\phi\colon X\longrightarrow A$.
By hypothesis, for every $u\in A$ there is a class $Q(u)\in\mathcal Q$ such that \eqref{eq:cover-condition} holds. In particular, for each $x\in X$, let $Q_x=Q(\phi(x))$.

Define an auxiliary bipartite graph $\Gamma$ with parts $Y$ and
$\mathcal Q$ by setting, for each $y\in Y$,
\begin{equation}
N_\Gamma(y)
=
\mathcal Q\setminus\{Q_x:x\in N_T(y)\}.
\label{eq:class-neighborhood}
\end{equation}
Equivalently, $yQ\in E(\Gamma)$ if and only if $Q\ne Q_x$ for every
$x\in N_T(y)$. Thus $N_\Gamma(y)$ consists of the partition classes
from which the image of $y$ may be safely chosen: every vertex in such
a class is adjacent in $H$ to $\phi(x)$ for each $x\in N_T(y)$.

We claim that $\Gamma$ has a matching saturating $Y$. By Hall's theorem
\cite{Hall1935}, it suffices to show that
\[
|N_\Gamma(J)|\ge |J|
\]
for every $J\subseteq Y$. The case $J=\varnothing$ is trivial, so let
$\varnothing\ne J\subseteq Y$, and set $j=|J|$. Define
\[
\mathcal R_J=\bigcap_{y\in J}\{Q_x:x\in N_T(y)\}.
\]
It follows from \eqref{eq:class-neighborhood} that
$N_\Gamma(J)=\mathcal Q\setminus\mathcal R_J$. 

By the definition of $\mathcal R_J$, for every $Q\in\mathcal R_J$ and
$y\in J$, there exists $x\in N_T(y)$ such that $Q_x=Q$.
Choose one such edge $xy$ for each pair $(y,Q)$. These chosen edges are
distinct, since an edge $xy$ determines both $y$ and $Q_x$. Hence
\[
|\mathcal R_J|\cdot j\le e_T(J,N_T(J)).
\]
Since $T$ is a tree, the subgraph induced by $J\cup N_T(J)$ is a forest, so
\[
e_T(J,N_T(J))\le |J|+|N_T(J)|-1\le j+a-1.
\]
It follows that
\[
|\mathcal R_J| \le \frac{a+j-1}{j}.
\]
Since $j\le|Y|\le a-1$, we have $j(a+1-j)-(a+j-1)=(j-1)(a-j-1)\ge0$, and hence 
\[
|\mathcal R_J|\le\frac{a+j-1}{j}\le a+1-j.
\]
Therefore, using $|\mathcal Q|\ge a+1$ from the assumption, we obtain
\[
|N_\Gamma(J)|
=|\mathcal Q|-|\mathcal R_J|
\ge (a+1)-(a+1-j)
=j
=|J|.
\]
This proves the claim.

For each $y\in Y$, denote by $\widehat{Q}_y$ the member of $\mathcal Q$ matched to $y$.  The sets $\widehat{Q}_y$, $y\in Y$, are pairwise distinct. Moreover, since $y\widehat{Q}_y$ is an edge of $\Gamma$, the definition of $\Gamma$ gives
\begin{equation}
\widehat{Q}_y\ne Q_x
\qquad\text{for every }x\in N_T(y).
\label{eq:distinct-defect-classes}
\end{equation}

Since every member of $\mathcal Q$ is nonempty, choose a vertex
\[
z_y\in\widehat{Q}_y
\qquad\text{for each }y\in Y.
\]
Note that the classes $\widehat{Q}_y$ are pairwise disjoint, hence the vertices $z_y$ are distinct.

Define a map $\psi\colon V(T)\longrightarrow V(H)$ by
\[
\psi(v)=
\begin{cases}
\phi(v),&v\in X,\\
z_v,&v\in Y.
\end{cases}
\]
The restriction of $\psi$ to $X$ is injective because $\phi$ is a
bijection, and its restriction to $Y$ is injective because the vertices
$z_y$ are distinct.  Moreover,
\[
\psi(X)\subseteq A,\qquad
\psi(Y)\subseteq B,
\]
and $A\cap B=\varnothing$.  Hence $\psi$ is injective.

To verify that $\psi$ is an embedding, let $xy\in E(T)$, where
$x\in X$ and $y\in Y$.  Since $x\in N_T(y)$, the definition of
$\Gamma$ and the choice of $\widehat{Q}_y$ imply that
$\widehat{Q}_y\ne Q_x$ by \eqref{eq:distinct-defect-classes}.
The sets $\widehat{Q}_y$ and $Q_x$ are therefore distinct members of
the partition $\mathcal Q$.

 Since $z_y\in\widehat{Q}_y$, we have $z_y\notin Q_x$.
Applying \eqref{eq:cover-condition} with $u=\phi(x)$ gives $B\subseteq N_H(u)\cup Q_x$. Since $u\in A$ and $z_y\in B\setminus Q_x$, it follows that $z_y\in N_H(\phi(x))$.
Consequently,
$
\psi(x)\psi(y)=\phi(x)z_y\in E(H).
$
Thus $\psi$ is an embedding of $T$ into $H$ such that
$\psi(X)\subseteq A$ and $\psi(Y)\subseteq B$.
\end{proof}

With the necessary auxiliary lemmas in place, we now prove
Proposition~\ref{prop:long-cycle-range}.

\begin{proof}[Proof of Proposition~\ref{prop:long-cycle-range}]
Fix $n,m$ and $T_n$ as in Proposition~\ref{prop:long-cycle-range}. Suppose,
for a contradiction, that a red--blue coloring of $K_{2n-1}$ contains neither
a red copy of $T_n$ nor a blue $C_m$.

\medskip
\textbf{Step 1. Forming the blue clique and ear budget.}
Let $\R$ and $\B$ denote the red and blue spanning graphs, respectively.
We apply Lemma~\ref{lem:dense-lks} with $\alpha=1/3$ to the red graph on
$N=2n-1$ vertices. The bound $N\ge N_{\mathrm{HP}}$ holds by the proposition, and for $n\ge5$ we have
\[
\frac{N}{3}=\frac{2n-1}{3}<n-1<N.
\]
If at least $n$ vertices of $\R$ had degree at least $n-1$, then, since $n>N/2$, Lemma~\ref{lem:dense-lks} would imply that $\R$ contains a copy of $T_n$, a contradiction.
Consequently, at least $n$ of the $2n-1$ vertices have red degree at most
$n-2$, and hence blue degree at least
\[
 (2n-2)-(n-2)=n.
\]
Choose an $n$-set $W$ such that each vertex has blue degree at least $n$.

List the non-edges of $\B[W]$ (equivalently, the red edges in $W$) as $e_1,\ldots,e_t$, where $e_i=x_iy_i$, and define
\[
 \B_0=\B,
 \qquad
 \B_i=\B_{i-1}+e_i\quad(1\le i\le t).
\]
At step $i$, we add $e_i$ to the blue graph and let $\R_i=\overline{\B_i}$ be the resulting red graph.
Thus
\[
\B_0\subseteq \B_1\subseteq \cdots\subseteq \B_t,\qquad \R_0\supseteq \R_1\supseteq \cdots\supseteq \R_t.
\]

For each $1\le i\le t$, the edge $e_i=x_iy_i$ is not present
in $\B_{i-1}$.  Since $x_i,y_i\in W$ and every vertex of $W$ has
degree at least $n$ in $\B_0$, we have
\[
d_{\B_0}(x_i)\ge n,
\qquad
d_{\B_0}(y_i)\ge n.
\]
Therefore,
\[d_{\B_{i-1}}(x_i)+d_{\B_{i-1}}(y_i)
\ge d_{\B_0}(x_i)+d_{\B_0}(y_i)
\ge 2n
>|V(\B_{i-1})|.\]
Thus the nonadjacent vertices $x_i$ and $y_i$ satisfy the degree-sum
hypothesis of Lemma~\ref{lem:marked-closure} in $\B_{i-1}$.

Suppose that $\B_t$ contains a cycle of order at least $m$ containing every
vertex of $W$.
Applying Lemma~\ref{lem:marked-closure} successively for $i=t,t-1,\dots,1$
gives such a cycle in the original blue graph $\B_0$.
Applying Lemma~\ref{lem:exact-adjustment} to $\B_0$ yields a $C_m$ in $\B_0$, a contradiction.
Consequently, the augmented blue graph $\B_t$ has the following two properties:

\medskip
(1) the $n$-set $W$ is a blue clique;

\smallskip
(2) there is no cycle of length at least $m$ containing all vertices of $W$.

\medskip
We next convert the absence of a long blue cycle through \(W\) into
structural information needed to embed \(T_n\) in the red graph.

For the remainder of the proof, write \(\B=\B_t\) and
\(\R=\R_t\) for convenience, and let
\[
U=V(K_{2n-1})\setminus W.
\]
Thus \(|U|=n-1\).

\medskip
\textbf{Step 2. Extracting red structure from the long-cycle obstruction.}
We begin by applying Lemma~\ref{lem:max-ear-forest} to the augmented
blue graph \(\B\) with \(W\) as chosen and with parameter \(L_0=m\). The hypotheses are satisfied because \(W\) is a blue clique and \(\B\) has no cycle of length at least \(m\) through \(W\). The lemma gives a linear forest $F$ on $W$, a family of blue $W$-ears $\mathcal P=\{P_e:e\in E(F)\}$, and the sets
\[
\rho=\sum_{P\in\mathcal P}|V(P)\cap U|,\qquad
U_{\mathrm{free}}=U\setminus\bigcup_{P\in\mathcal P}V(P),\qquad
W_{\mathrm{int}}=\{w\in W:d_F(w)=2\}.
\]
The empty forest is allowed; in this case \(E(F)=\varnothing\), \(W_{\mathrm{int}}=\varnothing\), and the subsequent claims involving \(E(F)\) are vacuous.

By Lemma~\ref{lem:max-ear-forest}(i),
\begin{equation}\label{eq:U-size}
|U_{\mathrm{free}}|\ge (2n-1)-m+1=2n-m.
\end{equation}
By Lemma~\ref{lem:max-ear-forest}(ii), for every \(u\in U_{\mathrm{free}}\), the set \(N_\B(u)\cap(W\setminus W_{\mathrm{int}})\) is contained in a single path component of \(F\) and has size at most \(2\); if it has size \(2\), it is precisely the set of endpoints of that component.

For each edge \(e=xy\in E(F)\), let \(v_{e,x}\) and \(v_{e,y}\) denote the neighbors of \(x\) and \(y\), respectively, on the ear \(P_e\). We first record the following consequence of maximality.

\begin{claim}\label{clm:boundary-return}
If \(u\in U_{\mathrm{free}}\), \(x\in N_\B(u)\cap W_{\mathrm{int}}\), and \(e\in E(F)\) is incident with \(x\), then \(uv_{e,x}\) is a red edge.
\end{claim}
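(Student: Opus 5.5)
We argue by contradiction: if $uv_{e,x}$ is blue, we modify $\mathcal P$ into an admissible family with strictly more internal vertices, which contradicts the maximality in Lemma~\ref{lem:max-ear-forest}.

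Write $e=xy$ and $P_e=x\,v_{e,x}\cdots v_{e,y}\,y$. Define $P'$ to be the path obtained from $P_e$ by replacing the edge $x v_{e,x}$ with the path $x\,u\,v_{e,x}$. Both $ux$ and $uv_{e,x}$ are blue, so $P'$ is a blue path. Its endpoints are still $x$ and $y$. Its internal vertices are those of $P_e$ together with $u$, and all of them lie in $U$ because $u\in U_{\mathrm{free}}\subseteq U$. Hence $P'$ is a nontrivial $W$-ear.

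Next we check admissibility of $\mathcal P'=(\mathcal P\setminus\{P_e\})\cup\{P'\}$. Since $u\in U_{\mathrm{free}}$, the vertex $u$ lies on no ear of $\mathcal P$. Therefore $P'$ is internally disjoint from every other ear of the family. The endpoint pairs are unchanged, so they remain distinct and still form the same linear forest $F$ on $W$. Thus $\mathcal P'$ is an admissible family.

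Its total number of internal vertices is $\rho+1$, contradicting the maximal choice of $\mathcal P$. So $uv_{e,x}$ is red.

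Two remarks on this argument. First, the case $v_{e,x}=v_{e,y}$ (an ear with a single internal vertex) is harmless: the new ear is simply $x\,u\,v_{e,x}\,y$. Second, the hypothesis $x\in W_{\mathrm{int}}$ is not actually needed for this ear-insertion argument; it only indicates where the claim will be used later. The argument also does not use the no-long-cycle hypothesis, only maximality, and there is no real obstacle beyond this admissibility check.
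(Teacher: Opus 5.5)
Your proof is correct and is essentially the paper's argument: you assume $uv_{e,x}$ is blue, reroute $P_e$ through $u$ via $x\,u\,v_{e,x}$, note that the endpoint pairs (hence $F$) are unchanged and that $u\in U_{\mathrm{free}}$ keeps the new ear internally disjoint from the others, and obtain $\rho+1$ internal vertices, contradicting maximality. Your remark that neither $x\in W_{\mathrm{int}}$ nor the no-long-cycle hypothesis is actually used is also accurate; the paper's proof does not use them either.
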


\begin{proof}
Suppose that \(uv_{e,x}\) were blue. The edge \(ux\) is blue by the definition of \(N_\B(u)\). Replace the edge \(xv_{e,x}\) of \(P_e\) with the path \(xuv_{e,x}\), and denote the resulting ear by \(P'_e\). Since \(u\in U_{\mathrm{free}}\), the path \(P'_e\) is internally disjoint from every ear in \(\mathcal P\setminus\{P_e\}\). Replacing \(P_e\) by \(P'_e\) preserves all endpoint pairs and hence the linear forest $F$, while increasing the total internal order from \(\rho\) to \(\rho+1\). This contradicts the maximality of \(\mathcal P\).
\end{proof}

Now define \(N_W(u)=N_\B(u)\cap W\) for each \(u\in U\), and for each \(u\in U_{\mathrm{free}}\), set
\begin{equation}
 \Lambda(u)=\bigl(W\setminus N_W(u)\bigr)
 \cup\{v_{e,x}:x\in N_W(u)\cap W_{\mathrm{int}},\ e\in E(F),\ x\in e\}.
 \label{eq:return-list}
\end{equation}
Every vertex of \(W\setminus N_W(u)\) is joined to \(u\) by a red edge, and Claim~\ref{clm:boundary-return} gives the same conclusion for every vertex in the second set in \eqref{eq:return-list}. Thus $\Lambda(u)\subseteq N_{\R}(u)$, i.e., every vertex of \(\Lambda(u)\) is a red neighbor of \(u\).

Let \(p=|E(F)|=|\mathcal P|\), and let \(c\) be the number of nontrivial components of $F$. Every member of $\mathcal P$ is a nontrivial $W$-ear and therefore has at least one internal vertex. Since the ears in $\mathcal P$ are pairwise internally vertex-disjoint,
\begin{equation}\label{eq-2}
\rho=\sum_{P\in\mathcal P}|V(P)\cap U|\ge |\mathcal P|=p.
\end{equation}

The preceding control of the blue neighborhoods yields the following
uniform estimate for the sets \(\Lambda(u)\), which will be used to verify the Hall condition for the sets of available images of the leaves of $T_n$.
\begin{claim}\label{clm:return-union}
For every nonempty set \(Z\subseteq U_{\mathrm{free}}\),
$\left|\bigcup_{u\in Z}\Lambda(u)\right|\ge n-2.$
\end{claim}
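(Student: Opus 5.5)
The claim follows from a pointwise bound: it suffices to prove that \(|\Lambda(u)|\ge n-2\) for every single \(u\in U_{\mathrm{free}}\). Indeed, for nonempty \(Z\), the union \(\bigcup_{u\in Z}\Lambda(u)\) contains \(\Lambda(u_0)\) for any fixed \(u_0\in Z\). So I would fix \(u\in U_{\mathrm{free}}\) and count the two parts of \eqref{eq:return-list} separately.

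\textbf{The two parts of \(\Lambda(u)\) are disjoint.} The first part is \(W\setminus N_W(u)\), of size \(n-|N_W(u)\cap W_{\mathrm{int}}|-|N_W(u)\setminus W_{\mathrm{int}}|\). The second part consists of the vertices \(v_{e,x}\). Since every ear in \(\mathcal P\) is nontrivial, the neighbor \(v_{e,x}\) of the endpoint \(x\) on \(P_e\) is an internal vertex of \(P_e\), so it lies in \(U\). Hence the second part is disjoint from \(W\), and the two parts add.

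\textbf{The second part has at least \(k\) vertices.} Set \(k=|N_W(u)\cap W_{\mathrm{int}}|\). Each \(x\in N_W(u)\cap W_{\mathrm{int}}\) has degree \(2\) in \(F\), so it yields two incidences \((x,e)\) with \(e\in E(F)\). Together these give \(2k\) incidences, each mapped to a vertex \(v_{e,x}\in U\).
\begin{itemize}
\item If \(v_{e,x}=v_{e',x'}\), then this vertex is internal to both \(P_e\) and \(P_{e'}\). Since the ears are pairwise internally vertex-disjoint, \(e=e'\).
\item Inside a single ear \(P_e\) with \(e=xy\), the vertices \(v_{e,x}\) and \(v_{e,y}\) can coincide, but only when \(P_e\) has exactly one internal vertex.
\end{itemize}
So every vertex of the second part arises from at most two incidences. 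Therefore the second part has at least \(2k/2=k\) distinct vertices.

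\textbf{Conclusion.} Combining the two counts,
\[
|\Lambda(u)|\ge n-k-|N_W(u)\setminus W_{\mathrm{int}}|+k=n-|N_\B(u)\cap(W\setminus W_{\mathrm{int}})|.
\]
By Lemma~\ref{lem:max-ear-forest}(ii), the last set has size at most \(2\), which gives \(|\Lambda(u)|\ge n-2\).

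The only delicate point is the possible coincidence \(v_{e,x}=v_{e,y}\) for ears with a single internal vertex. This costs a factor of two, but it is exactly compensated by the fact that each interior vertex \(x\) of \(F\) contributes two incidences. No further case analysis should be needed.
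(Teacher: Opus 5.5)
Your proof is correct, and it uses the same counting as the paper. The $W$-part and the ear-neighbour part of $\Lambda$ are disjoint, the ear-neighbour part is bounded below by double counting the $2k$ incidences $(x,e)$ with multiplicity at most two, and Lemma~\ref{lem:max-ear-forest}(ii) caps $|N_\B(u)\cap(W\setminus W_{\mathrm{int}})|$ at $2$.

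The only difference is how you handle general $Z$. You prove the bound for a single $\Lambda(u_0)$ and then use monotonicity of the union. The paper instead works with general $Z$ through $I=\bigcap_{u\in Z}N_W(u)$ and $S=\bigcup_{u\in Z}(N_W(u)\cap W_{\mathrm{int}})$; its argument with $Z=\{u_0\}$ is exactly yours. That extra bookkeeping never gives more than $n-2$, so your reduction loses nothing and is cleaner.
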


\begin{proof}
Let \(I=\bigcap_{u\in Z}N_W(u)\), and \(S=\bigcup_{u\in Z}\bigl(N_W(u)\cap W_{\mathrm{int}}\bigr)\). The set \(S\) collects all neighbors of vertices in \(Z\) that lie in \(W_{\mathrm{int}}\); it will be used to estimate the second part of the union in \(\Lambda(u)\).

By De Morgan's law,
\[
\bigcup_{u\in Z}\bigl(W\setminus N_W(u)\bigr)=W\setminus I,
\]
and since this set is contained in $\bigcup_{u\in Z}\Lambda(u)$, we have
\begin{equation}
\left|\bigcup_{u\in Z}\Lambda(u)\right|\ge |W\setminus I|=n-|I|. \label{eq:WsetminusI}
\end{equation}
Also, choosing any $u_0\in Z$, we have $I\subseteq N_W(u_0)$, so
$I\setminus W_{\mathrm{int}}\subseteq N_W(u_0)\setminus W_{\mathrm{int}},$
and therefore by Lemma~\ref{lem:max-ear-forest}(ii), 
\[
|I\setminus W_{\mathrm{int}}|\le 2.
\]

It remains to estimate the contribution from the second part of $\Lambda(u)$. For each $x\in S$, pick $u_x\in Z$ with $x\in N_W(u_x)\cap W_{\mathrm{int}}$, and define
\[
B_S=\{v_{e,x}:x\in S,\ e\in E(F),\ x\in e\}.
\]
By the definition of $\Lambda(u)$, we have $B_S\subseteq\bigcup_{u\in Z}\Lambda(u)$.

Since \(S\subseteq W_{\mathrm{int}}\), every vertex \(x\in S\) has
degree \(2\) in \(F\). Hence there are exactly \(2|S|\) pairs
\((x,e)\) such that \(x\in S\), \(e\in E(F)\), and \(x\in e\).
Each vertex of \(B_S\) can occur as \(v_{e,x}\) for at most two such
pairs, since distinct ears are internally vertex-disjoint and each
edge \(e\) has two endpoints. Therefore
\[
2|S|\le 2|B_S|,
\]
and hence
$|B_S|\ge |S|.$

Now $I\cap W_{\mathrm{int}}\subseteq S$: if $x\in I\cap W_{\mathrm{int}}$, then $x\in N_W(u)\cap W_{\mathrm{int}}$ for every $u\in Z$, so in particular $x\in S$. Thus
\[
|I|=|I\setminus W_{\mathrm{int}}|+|I\cap W_{\mathrm{int}}|
\le 2+|S|.
\]

Finally, using the fact that $W\setminus I$ and $B_S$ are disjoint, together with \eqref{eq:WsetminusI} and the estimate $|B_S|\ge |S|$, we obtain
\[
\begin{aligned}
\left|\bigcup_{u\in Z}\Lambda(u)\right|
&\ge |W\setminus I|+|B_S| \\
&\ge (n-|I|)+|S| \\
&= n-(|I\setminus W_{\mathrm{int}}|+|I\cap W_{\mathrm{int}}|)+|S|\\
&\ge n-2.
\end{aligned}
\]
This proves the claim.
\end{proof}

\textbf{Step 3. Embedding the tree.}
We now use the structure established in Step 2 to embed the tree $T_n$.

Let $(X,Y)$ be the bipartition of $T_n$, with $|X|=a\le b=|Y|$ and $a+b=n$.
Let $Y_1$ be the set of leaves in $Y$, let $\lambda=|Y_1|$, and set $Y_0=Y\setminus Y_1$ with $b_0=|Y_0|$.  Since the degree sum over $Y$ counts each edge of $T_n$ exactly once, and each vertex of $Y_0$ has degree at least $2$, we have
\[
n-1=\sum_{y\in Y}d_{T_n}(y)\ge \lambda+2(b-\lambda)=2b-\lambda,
\]
which, together with $a+b=n$ and $b_0=b-\lambda$, yields
\begin{equation}
\lambda\ge b-a+1,\qquad b_0\le a-1. \label{eq:leaves}
\end{equation}

If $a=1$, then $T_n$ is the star $K_{1,n-1}$.  Since the original coloring contains no red copy of this star, every red degree is at most $n-2$, so the original blue graph has minimum degree at least $n>(2n-1)/2$.  By Corollary~\ref{lem:bondy}, it is pancyclic, contradicting the absence of a blue $C_m$.

Henceforth, assume $a\ge2$. Let $T_0=T_n-Y_1$.  Since every vertex of $Y_1$ is a leaf, $T_0$ is a tree with bipartition $(X,Y_0)$; moreover, by \eqref{eq:leaves}, $|X|=a>b_0=|Y_0|$.

Set 
$
W_0 = W\setminus W_{\mathrm{int}}.
$
Combining \eqref{eq:U-size} with
$m\le 2n-\lfloor n/2\rfloor$ and
$a\le\lfloor n/2\rfloor$, we obtain
\begin{equation}
|U_{\mathrm{free}}|
\ge 2n-m
\ge \left\lfloor\frac n2\right\rfloor
\ge a.
\label{eq:U-embedding-size}
\end{equation}

We next define the partition $\mathcal Q$ of $W_0$ required in Lemma~\ref{lem:partition-defect-tree}. Recall the linear forest $F$ on $W$ defined by the endpoint pairs of the ears in $\mathcal P$. Place the two endvertices of each nontrivial component of $F$ in one class, and each isolated vertex in a singleton class.

Since a nontrivial path component with \(r\) edges has \(r+1\)
vertices, the \(c\) nontrivial components of \(F\) contain \(p+c\)
vertices altogether. Hence \(F\) has \(n-p-c\) isolated vertices.
By the definition of \(\mathcal Q\), there is one two-vertex class for
each nontrivial component and one singleton class for each isolated
vertex. Therefore,
\[
|\mathcal Q|=c+(n-p-c)=n-p.
\]

By \eqref{eq-2}, we have \(p\le \rho\). Since the ears in
\(\mathcal P\) are pairwise internally vertex-disjoint, their interiors
contain \(\rho\) vertices in total. As \(|U|=n-1\), the definition of
\(U_{\mathrm{free}}\) gives
$
|U_{\mathrm{free}}|
=|U|-\rho
=n-1-\rho.
$
Consequently, by \eqref{eq:U-embedding-size},
\[
|\mathcal Q|=n-p
\ge n-\rho
=|U_{\mathrm{free}}|+1
\ge a+1.
\]

Choose a set $U_{\mathrm{free}}'\subseteq U_{\mathrm{free}}$ of order $a$.  For each $u\in U_{\mathrm{free}}'$, the set of its nonneighbors in the red bipartite graph between $U_{\mathrm{free}}'$ and $W_0$ is $N_\B(u)\cap W_0=N_W(u)\setminus W_{\mathrm{int}}$, which is contained in one member of $\mathcal Q$ by Lemma~\ref{lem:max-ear-forest}(ii).  Thus, by Lemma~\ref{lem:partition-defect-tree}, the red bipartite graph with parts $U_{\mathrm{free}}'$ and $W_0$ contains a copy of $T_0$.  Let $\phi$ be an isomorphism from $T_0$ to this copy such that
\[
\phi(X)\subseteq U_{\mathrm{free}}',\qquad \phi(Y_0)\subseteq W_0.
\]

It remains to embed the leaves in $Y_1$.  For each $y\in Y_1$, let $u_y\in U_{\mathrm{free}}$ be the image of the unique neighbor of $y$ in $X$, and define
\[
M_y=\Lambda(u_y)\setminus \phi(Y_0),
\]
the set of available candidates for $y$.

By the observation after \eqref{eq:return-list}, every vertex of
$\Lambda(u_y)$ is a red neighbor of $u_y$. Moreover,
$\Lambda(u_y)\cap U_{\mathrm{free}}'=\varnothing$: indeed, the first
part of $\Lambda(u_y)$ lies in $W$, while every vertex in its second
part lies internally on an ear in $\mathcal P$ and hence outside
$U_{\mathrm{free}}$. Therefore, after removing $\phi(Y_0)$, every
vertex of $M_y$ is unused by the current embedding.

We now verify Hall's condition for the family $\{M_y:y\in Y_1\}$.
For a nonempty set $J\subseteq Y_1$, let
$U_J=\{u_y:y\in J\}\subseteq U_{\mathrm{free}}$. Then
$
\bigcup_{y\in J}\Lambda(u_y)
=
\bigcup_{u\in U_J}\Lambda(u),
$
and hence Claim~\ref{clm:return-union} gives
\[
\left|\bigcup_{y\in J}\Lambda(u_y)\right|\ge n-2.
\]
Removing the $b_0$ vertices of $\phi(Y_0)$ can reduce this union by at
most $b_0$, so
\[
\left|\bigcup_{y\in J}M_y\right|
\ge
\left|\bigcup_{y\in J}\Lambda(u_y)\right|-|\phi(Y_0)|
\ge n-2-b_0.
\]
Since $a+b=n$, $b_0=b-\lambda$, and $a\ge2$, it follows that
\[
\left|\bigcup_{y\in J}M_y\right|
\ge n-2-b_0
=a+\lambda-2
\ge\lambda
=|Y_1|
\ge|J|.
\]
Thus Hall's condition holds for every $J\subseteq Y_1$. By Hall's
theorem \cite{Hall1935}, the family $\{M_y:y\in Y_1\}$ has a system of
distinct representatives. Using these representatives as the images
of the leaves extends the current embedding to a red copy of $T_n$ in
$\R_t$. Since $\R_t\subseteq\R_0$, this copy is red in the original
coloring, contradicting our assumption and completing the proof of Proposition~\ref{prop:long-cycle-range}.
\end{proof}

\section{Proof of Theorem~\ref{thm:main}}\label{sec:main-proof}

Let $m\ge5$ be any odd integer, and let $n$ be any integer satisfying $n\ge\max\{C,\lceil(2m-1)/3\rceil\}$, 
where $C=\max\{5,\lceil(N_{\mathrm{HP}}+1)/2\rceil\}$. Set $N=2n-1$.
Consider a red--blue coloring of $K_N$ with no red copy of $T_n$; we will show that it contains a blue $C_m$.

\smallskip
(1) $m\le n+1$: Since $n\ge C\ge5$ and $N=2n-1$, we have $N/3<n-1<N$, so Lemma~\ref{lem:dense-lks} applies with $\alpha=1/3$ and $k=n-1$. If at least $n$ vertices have red degree at least $n-1$, the lemma yields a red copy of $T_n$, a contradiction. Thus fewer than $n$ vertices have red degree at least $n-1$; equivalently, at least $n$ vertices have red degree at most $n-2$. Each such vertex has blue degree at least $(2n-2)-(n-2)=n$. Applying Lemma~\ref{lem:median-pancyclic} with $q=n$ gives blue cycles of every length $3\le s\le n+1$, and hence, in particular, a blue $C_m$. This completes the short-cycle case.

\smallskip
(2) $m\ge n+2$: The condition $n\ge\lceil(2m-1)/3\rceil$ implies $3n\ge2m-1$, and therefore
\[
m\le \left\lfloor\frac{3n+1}{2}\right\rfloor
=2n-\left\lfloor\frac n2\right\rfloor.
\]
Together with $n\ge C\ge5$ and $N=2n-1\ge N_{\mathrm{HP}}$ (as $n\ge C$), all hypotheses of Proposition~\ref{prop:long-cycle-range} are satisfied. That proposition yields either a red $T_n$ or a blue $C_m$.

\smallskip
Therefore, under the assumptions of the theorem, every such coloring contains a red $T_n$ or a blue $C_m$, proving $R(T_n,C_m)\le2n-1$. Together with the lower bound $R(T_n,C_m)\ge2n-1$ from \eqref{eq:lower}, the theorem follows. \hfill$\square$

\bigskip
\noindent \textbf{Concluding remarks.}
In our proof, the constant \(C\) enters only through the large-order
threshold in the dense Loebl--Koml\'os--S\'os theorem. Since that
theorem guarantees only the existence of such a threshold and does not
give an explicit value of \(N_{\mathrm{HP}}\), the present argument
does not give an explicit value of \(C\). It therefore leaves open
the determination of \(f(m)\) for finitely many smaller odd values of
\(m\). Determining these remaining values and describing the extremal
colorings near the threshold remain natural problems.

For all sufficiently large odd \(m\), we determine the exact threshold for trees:
\(f(m)=\left\lceil\frac{2m-1}{3}\right\rceil.\)
This sharpens the previously known linear bounds to an exact result.
Fan and Lin~\cite{FanLin2025} obtained linear threshold results in a
broader sparse-graph setting. It is natural to ask whether comparably
sharp thresholds can be determined for other natural classes of sparse
graphs, such as unicyclic graphs and, more generally, connected graphs
with bounded cyclomatic number. Recall that the cyclomatic number of a
graph \(G\) is \(\beta(G)=|E(G)|-|V(G)|+c(G),\)
where \(c(G)\) denotes the number of components of \(G\).
Such an extension would likely require additional ideas beyond the
ear-forest argument used here. In particular, the final Hall-based
embedding step is tailored to attaching leaves of a tree and does not
directly control the additional edges that create cycles.

\medskip
\noindent\textbf{Declaration of generative AI use.}
The authors used generative AI tools solely for language editing and
proofreading. All mathematical content and arguments were developed and
verified by the authors, who take full responsibility for the manuscript.

\bibliographystyle{plain}

\end{document}